\documentclass[11pt]{amsart}
\usepackage{cases}
\usepackage{amsmath}
\usepackage{amscd}
\usepackage{amssymb}
\usepackage{amsfonts}
\newtheorem{theorem}{Theorem}[section]
\newtheorem{lemma}[theorem]{Lemma}
\newtheorem{corollary}[theorem]{Corollary}

\theoremstyle{definition}

\newtheorem{proposition}[theorem]{Proposition}

\theoremstyle{remark}
\newtheorem{remark}[theorem]{Remark}
\numberwithin{equation}{section}

\begin{document}
\title[Elliptic gradient estimates for a nonlinear heat equation]
{Elliptic gradient estimates for a nonlinear heat equation and applications}

\author{Jia-Yong Wu}

\address{Department of Mathematics, Shanghai Maritime University,
1550 Haigang Avenue, Shanghai 201306, P. R. China}
\email{jywu81@yahoo.com}

\date{\today}
\subjclass[2010]{Primary 53C21, 58J35; Secondary 35B53, 35K55}

\keywords{Gradient estimate; Liouville theorem; smooth metric measure space;
Bakry-\'{E}mery Ricci tensor; log-Sobolev inequality}

\thanks{This work is partially supported by NSFC (11101267, 11271132).}

\begin{abstract}
In this paper, we study elliptic gradient estimates for a nonlinear $f$-heat
equation, which is related to the gradient Ricci soliton and the weighted
log-Sobolev constant of smooth metric measure spaces. Precisely, we obtain
Hamilton's and Souplet-Zhang's gradient estimates for positive solutions
to the nonlinear $f$-heat equation only assuming the Bakry-\'Emery
Ricci tensor is bounded below. As applications, we prove parabolic Liouville
properties for some kind of ancient solutions to the nonlinear $f$-heat equation.
Some special cases are also discussed.
\end{abstract}
\maketitle

\section{Introduction}\label{Int1}

\subsection{Background}
This is a sequel to our previous work \cite{[Wu2]}. In that paper we
proved elliptic gradient estimates for positive solutions to the
$f$-heat equation on smooth metric measure spaces with only the
Bakry-\'Emery Ricci tensor bounded below. We also applied the
results to get parabolic Liouville theorems for some ancient
solutions to the $f$-heat equation. In this paper we will investigate
elliptic gradient estimates and Liouville properties for positive
solutions to a nonlinear $f$-heat equation (see equation \eqref{equ1} below)
on complete smooth metric measure spaces.

Recall that an $n$-dimensional smooth metric measure space $(M^n,g,e^{-f}dv)$
is a complete Riemannian manifold $(M^n,g)$ endowed with a weighted measure
$e^{-f}dv$ for some $f\in C^\infty(M)$, where $dv$ is the volume element of
the metric $g$. The associated $m$-Bakry-\'Emery Ricci tensor \cite{[BE]}
is defined by
\[
Ric_f^m:=Ric+\nabla^2 f-\frac{1}{m}df\otimes df
\]
for some constant $m>0$, where $Ric$ and $\nabla^2$ denote the Ricci tensor
and the Hessian of the metric $g$. When $m=\infty$, we have the ($\infty$-)Bakry-\'Emery
Ricci tensor
\[
Ric_f:=Ric+\nabla^2 f.
\]
The Bochner formula for $Ric_f^m$ can be read as (see also \cite{[Wu2]})
\begin{equation}\label{weighBoch}
\begin{split}
\frac 12\Delta_f|\nabla u|^2&=|\nabla^2
u|^2+\langle\nabla\Delta_f u, \nabla u\rangle+Ric_f(\nabla u, \nabla u)\\
&\geq\frac{(\Delta_f u)^2}{m+n}
+\langle\nabla\Delta_f u, \nabla u\rangle+Ric_f^m(\nabla u, \nabla u)
\end{split}
\end{equation}
for any $u\in C^\infty(M)$. When $m<\infty$, \eqref{weighBoch} could be
viewed as the Bochner formula for the Ricci tensor of an $(n+m)$-dimensional
manifold. Hence many geometric and topological properties for manifolds
with Ricci tensor bounded below can be possibly extended to smooth metric
measure spaces with $m$-Bakry-\'Emery Ricci tensor bounded below, see for
example \cite{[LD],[Lott]}. When $m=\infty$, the ($\infty$-)Bakry-\'Emery
Ricci tensor is related to the gradient Ricci soliton
\[
Ric_f=\lambda\, g
\]
for some constant $\lambda$, which plays an important role in Hamilton's
Ricci flow as it corresponds to the self-similar solution and arises as
limits of dilations of singularities in the Ricci flow \cite{[Hami]}.
A Ricci soliton is said to be shrinking, steady, or expanding according
to $\lambda>0$, $\lambda=0$ or $\lambda<0$. On the gradient estimate,
the smooth function $f$ is often called a potential function. We refer
\cite{[Cao1]} and the references therein for further discussions.

On smooth metric measure space $(M,g,e^{-f}dv)$, the $f$-Laplacian $\Delta_f$
is defined by
\[
\Delta_f:=\Delta-\nabla f\cdot\nabla,
\]
which is self-adjoint with respect to the weighted measure. The associated
$f$-heat equation is defined by
\begin{equation}\label{weiheat}
\frac{\partial u}{\partial t}=\Delta_f \,u
\end{equation}
If $u$ is independent of time $t$, then it is $f$-harmonic function. In the
past few years, various Liouville properties for $f$-harmonic functions
were obtained, see for example \cite{[Bri]}, \cite{[LD]}, \cite{[LD2]}, \cite{[MuWa]},
\cite{[WW]}, \cite{[Wu]}, \cite{[WuWu0]}, \cite{[WuWu]}, and the references
therein. Recently, the author \cite{[Wu2]} proved elliptic gradient estimates
and parabolic Liouville properties for $f$-heat equation under some assumptions
of ($\infty$-)Bakry-\'Emery Ricci tensor.

In this paper, we will study analytical and geometrical properties for positive
solutions to the equation
\begin{equation}\label{equ1}
\frac{\partial u}{\partial t}=\Delta_f\, u+au\ln u,
\end{equation}
where $a\in\mathbb{R}$, on complete smooth metric measure spaces $(M,g,e^{-f}dv)$
with only the Bakry-\'Emery Ricci tensor bounded below. Here we assume
$M$ has no boundary. It is well-known that all solutions to its Cauchy problem
exist for all time. Under the assumption of $Ric_f$, we shall prove
local elliptic (Hamilton's type and Souplet-Zhang's type) gradient estimates for
positive solutions to the nonlinear $f$-heat equation \eqref{equ1}. As applications,
we prove parabolic Liouville properties for the nonlinear $f$-heat equation \eqref{equ1}.

Historically, gradient estimates for the harmonic function on manifolds
were discovered by Yau \cite{[Yau]} and Cheng-Yau \cite{[Cheng-Yau]} in 1970s.
It was extended to the so-called Li-Yau gradient estimate for the heat
equation by Li and Yau \cite{[Li-Yau]} in 1980s. In 1990s, Hamilton \cite{[Ham93]}
gave an elliptic type gradient estimate for the heat equation on closed
manifolds, which was later generalized to the non-compact case by Kotschwar
\cite{[Kots]}. In 2006, Souplet and Zhang \cite{[Sou-Zh]} proved a localized
Cheng-Yau type estimate for the heat equation by adding a logarithmic correction 
term. Integrating Hamilton's or Souplet-Zhang's gradient estimates along 
space-time paths, their estimates exhibit an interesting phenomenon
that one can compare the temperature of two different points at the same time
provided the temperature is bounded. However, Li-Yau gradient estimate only
provides the comparison at different times.

Equation \eqref{equ1} has some relations to the geometrical quantities. On one
hand, the time-independent version of \eqref{equ1} with constant function $f$
is linked with gradient Ricci solitons, for example, see \cite{[Ma],[Yang]}
for detailed explanations. On the other hand, the steady-state version of
\eqref{equ1} is closely related to weighted log-Sobolev constants of smooth
metric measure spaces (Riemmann manifolds case due to Chung-Yau
\cite{[Chu-Yau]}). Recall that, weighted log-Sobolev constants $S_M$,
associated to a closed smooth metric measure space $(M^n,g,e^{-f}dv)$,
are the smallest positive constants such that the weighted logarithmic-Sobolev
inequality
\[
\int_M u^2\ln(u^2)\,e^{-f}dv\leq S_M\int_M |\nabla u|^2\,e^{-f}dv
\]
holds for all smooth function $u$ on $M$ satisfying $\int_M u^2 e^{-f}dv=V_f(M)$.
In particular, the case of Euclidean space $\mathbb{R}^n$ equipped with the
Gaussian measure
\[
e^{-f}dv:=(4\pi)^{-\frac n2}\exp\left(-\frac{|x|^2}{4}\right)dx
\]
is inequivalent to the original log-Sobolev inequality due to L. Gross \cite{[Gross]}.

If function $u$ achieves the weighted log-Sobolev constant and satisfies
$\int_M u^2 e^{-f}dv=V_f(M)$, that is,
\[
S_M=\frac{\int_M|\nabla u|^2\,e^{-f}dv}{\int_M u^2\ln u^2\,e^{-f}dv}
=\inf_{\phi\not=0}\frac{\int_M|\nabla\phi|^2\,e^{-f}dv}{\int_M\phi^2\ln\phi^2\,e^{-f}dv}.
\]
Using the Lagrange's method with respect to weighted measure $e^{-f}dv$,
we have
\[
\frac{-2\Delta_f u}{\int_Mu^2\ln u^2\,e^{-f}dv}
-\frac{\int_M|\nabla u|^2\,e^{-f}dv}{(\int_Mu^2\ln u^2\,e^{-f}dv)^2}\left(2u\ln u^2+2u\right)
+c_1u=0
\]
for some constant $c_1$. By the definition of $S_M$, this can be reduced to
\begin{equation}\label{vary}
-\Delta_f u-S_M(u\ln u^2+u)+c_2u=0
\end{equation}
for the constant
\[
c_2=\frac{c_1}{2}\int_Mu^2\ln u^2\,e^{-f}dv.
\]
Notice that multiplying \eqref{vary} by $u$ and integrating it with respect to
the weighted measure $e^{-f}dv$, we have
\[
\int_M |\nabla u|^2\,e^{-f}dv-S_M\int_M u^2(\ln u^2+1)\,e^{-f}dv
+c_2\int_Mu^2\,e^{-f}dv=0,
\]
which implies $S_M=c_2$. Therefore \eqref{vary} can be simplified as
\begin{equation}\label{sobl}
\Delta_f\, u+S_M\, u\ln u^2=0,
\end{equation}
which is an elliptic version of \eqref{equ1}. For \eqref{sobl},
if $Ric^m_f\geq 0$, using \eqref{weighBoch} instead of the classical
Bochner formula for the Ricci tensor and following Chung-Yau's
arguments \cite{[Chu-Yau]}, we immediately get
\[
\sup u\leq e^{(n+m)/2},\quad\quad |\nabla\ln u|^2+S_M\ln u^2\leq(n+m)S_M
\]
and
\[
S_M \geq\min\left\{\frac{\lambda_1}{8e}, \frac{1}{(n+m)d^2}\right\},
\]
where $\lambda_1$ and $d$ denote the first nonzero eigenvalue of the
$f$-Laplacian and the diameter of $(M^n,g,e^{-f}dv)$.

\subsection{Main results}
Our first result gives a local Hamilton's gradient estimate for any positive
solution to the equation \eqref{equ1}.
\begin{theorem}\label{main}
Let $(M,g,e^{-f}dv)$ be an $n$-dimensional complete smooth metric measure
space. For any point $x_0\in M$ and $R\geq2$, $Ric_f\geq-(n-1)K$ for some
constant $K\geq0$ in $B(x_0,R)$. Let $0<u(x,t)\leq D$ for some constant
$D$, be a smooth solution to the equation \eqref{equ1} in
$Q_{R,T}\equiv B(x_0,R)\times[t_0-T,t_0]\subset M\times(-\infty,\infty)$,
where $t_0\in \mathbb{R}$ and $T>0$.
\begin{enumerate}
\item [(i)] If $a\geq0$, then there exists a constant $c(n)$ such that
\begin{equation}\label{heor1}
\frac{|\nabla u|}{\sqrt{u}}\leq c(n) \sqrt{D}\left(\frac{1}{R}+\sqrt{\frac{|\alpha|}{R}}+\frac{1}{\sqrt{t{-}t_0{+}T}}+\sqrt{K}{+}\sqrt{c_1(n,K,a,D)}\right)
\end{equation}
in $Q_{R/2, T}$ with $t\neq t_0-T$, where $c_1(n,K,a,D)=\max\{2(n-1)K+a(2+\ln D),\,0\}$.

\item [(ii)] If $a<0$, further assuming that $\delta\leq u(x,t)\leq D$ for some constant $\delta>0$,
then there exists a constant $c(n)$ such that
\begin{equation}\label{heor2}
\frac{|\nabla u|}{\sqrt{u}}\leq c(n) \sqrt{D}\left(\frac{1}{R}+\sqrt{\frac{|\alpha|}{R}}+\frac{1}{\sqrt{t{-}t_0{+}T}}+\sqrt{K}+\sqrt{c_2(n,K,a,\delta)}\right)
\end{equation}
in $Q_{R/2, T}$ with $t\neq t_0-T$, where $c_2(n,K,a,\delta)=\max\{2(n-1)K+a(2+\ln\delta),\,0\}$.
\end{enumerate}
Here, $\alpha:=\max_{\{x|d(x,x_0)=1\}}\Delta_f\,r(x)$,
where $r(x)$ is the distance function to $x$ from base point $x_0$.
\end{theorem}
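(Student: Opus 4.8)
The plan is to control the scale-invariant quantity $P:=|\nabla u|^2/u$, since $\sqrt{P}=|\nabla u|/\sqrt{u}$ is precisely the left-hand side of \eqref{heor1} and \eqref{heor2}. First I would compute the action of the drift heat operator $\Delta_f-\partial_t$ on $P$. Writing the equation \eqref{equ1} as $\Delta_f u-\partial_t u=-au\ln u$ and inserting it into the weighted Bochner formula \eqref{weighBoch} applied to $u$, a direct calculation gives
\[
(\Delta_f-\partial_t)P=\frac{2|\nabla^2u|^2}{u}+\frac{2Ric_f(\nabla u,\nabla u)}{u}-a(2+\ln u)\,P-2\langle\nabla P,\nabla\ln u\rangle .
\]
The coefficient $a(2+\ln u)$ is the decisive feature: it is exactly here that the logarithmic nonlinearity registers, and bounding $\ln u\le\ln D$ when $a\ge0$ (respectively $\ln u\ge\ln\delta$ when $a<0$, which is what forces the extra hypothesis $u\ge\delta$ in part (ii)) produces the constants $c_1(n,K,a,D)$ and $c_2(n,K,a,\delta)$. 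Estimating the curvature term by $Ric_f\ge-(n-1)K$, I obtain on $Q_{R,T}$ the inequality
\[
(\Delta_f-\partial_t)P\ge\frac{2|\nabla^2u|^2}{u}-c_{\ast}\,P-2\langle\nabla P,\nabla\ln u\rangle ,
\]
where $c_{\ast}$ stands for $c_1$ or $c_2$ according to the sign of $a$, the truncation by $\max\{\cdot,0\}$ reflecting that a favourable sign of the coefficient only helps.

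The quadratic gain needed to close the argument has to be extracted from the Hessian term, because the naive term $|\nabla u|^4/u^2$ cancels against the transport term once $P$ is expanded. At the maximum point of the localized quantity below I would use Kato's inequality $|\nabla^2u|^2\ge|\nabla|\nabla u||^2$ together with the first-order condition to recover a term comparable to $P^2/u$. To localize I take $\psi=\eta(r(x)/R)$ for a fixed cutoff $\eta$ with $\eta\equiv1$ on $[0,1/2]$, $\eta\equiv0$ on $[1,\infty)$, $\eta'\le0$, and $|\eta'|^2/\eta$ and $|\eta''|$ bounded. Since $R\ge2$, the cutoff is supported in the region $r\ge1$, so the $f$-Laplacian comparison under $Ric_f\ge-(n-1)K$ established in \cite{[Wu2]} applies there and bounds $\Delta_f r$ by $\alpha+(n-1)K(r-1)$; this is precisely where $\alpha=\max_{d(x,x_0)=1}\Delta_f r$ enters, controlling $\Delta_f\psi$ by terms of size $1/R^2$, $|\alpha|/R$ and $K$.

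With $\tau:=t-t_0+T$ I would apply the maximum principle to $G:=\tau\psi^2P$ on $Q_{R,T}$. As $G$ vanishes on the parabolic boundary---spatially through $\psi$ and at the initial time through $\tau$---its maximum is attained at some interior point $(x_1,t_1)$, where $\nabla G=0$, $\Delta_fG\le0$ and $\partial_tG\ge0$, and the non-smoothness of $r$ on the cut locus is dealt with by Calabi's argument. Substituting the differential inequality for $P$ into $(\Delta_f-\partial_t)G\le0$, using $\nabla P=-(P/\psi^2)\nabla\psi^2$ to rewrite the transport and cutoff cross terms, and then applying Young's inequality to absorb the resulting $P^{3/2}$ cutoff contribution into the quadratic gain $\sim P^2/u$, I reach an algebraic upper bound for $G$ at $(x_1,t_1)$. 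Restricting to $\psi\equiv1$ on $B(x_0,R/2)$, recalling $G=\tau\psi^2P$ and $u\le D$, and taking square roots then yields \eqref{heor1} and \eqref{heor2}.

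The main obstacle I anticipate is this absorption step. The bad cutoff terms carry a weight $1/\sqrt{u}$ that is not bounded below in part (i), where no lower bound on $u$ is assumed; the point is that the Kato-derived gain $P^2/u$ carries a matching weight, so that after Young's inequality the surviving power of $u$ is positive and is therefore controlled by $u\le D$ alone, which is what accounts for the factor $\sqrt{D}$ in the final estimates. Keeping careful track of the $u$-weights through the Hessian/Kato step, so that no negative power of $u$ survives, is the delicate part; the lower bound $\delta$ in part (ii) is needed only to bound the nonlinear coefficient $a(2+\ln u)$ and plays no role in this absorption.
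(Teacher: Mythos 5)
Your proposal is essentially correct, and in fact you are estimating literally the same quantity as the paper: with $h=u^{1/3}$ and $\omega=h|\nabla h|^2$ one has $\omega=\tfrac19\,|\nabla u|^2/u=\tfrac19 P$, and your coefficient $a(2+\ln u)$ reproduces exactly the paper's $2a+a\ln D$ (resp.\ $2a+a\ln\delta$) after the same sign discussion, as does the role of the lower bound $\delta$ in part (ii). The genuine difference is where the quadratic gain comes from. The paper's Lemma \ref{Le11} produces it \emph{pointwise}: the cube-root substitution is tuned so that $2h|h_{ij}|^2+4h_ih_jh_{ij}+2h^{-1}h_i^4=2h\,|h_{ij}+h^{-1}h_ih_j|^2\ge0$, the Hessian is then discarded, and the remaining cross term $-8h_ih_jh_{ij}$ is rewritten via $\langle\nabla h,\nabla\omega\rangle$ to yield $4h^{-3}\omega^2$ everywhere in $Q_{R,T}$, so that the maximum principle is applied to an inequality already containing the $\omega^2$ term. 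You instead keep $P=|\nabla u|^2/u$, where the $|\nabla u|^4/u^3$ terms cancel identically, and recover the gain only \emph{at the maximum point} from Kato's inequality plus the first-order condition $\nabla P=-2P\nabla\psi/\psi$: this gives $2u^{-1}|\nabla|\nabla u||^2\ge \tfrac{P^2}{2u}-2P^{3/2}|\nabla\psi|/(\psi\sqrt u)$, and the extra $P^{3/2}$ term is of the same type as the transport term and absorbs by Young's inequality with a surviving factor $u\le D$, exactly as you anticipate. Both routes then close identically (Calabi's trick, the Wei--Wylie comparison $\Delta_f r\le\alpha+(n-1)K(R-1)$ on the annulus where $\partial_r\psi\ne0$ --- note it is the support of $\partial_r\psi$, not of $\psi$, that lies in $\{r\ge R/2\ge1\}$ --- and the $\sqrt D$ bookkeeping). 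What the paper's algebraic identity buys is a clean differential inequality (Lemma \ref{Le11}) reusable elsewhere and no dependence of the gain on the critical-point equation; what your version buys is avoiding the somewhat magical choice of exponent $1/3$, at the cost of a slightly more delicate absorption in which you must, as you note, track that no negative power of $u$ survives. I see no gap, only the need to carry out the Young's-inequality bookkeeping with a cutoff satisfying $|\eta'|^4/\eta^2\le C$, which your stated hypotheses provide.
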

\begin{remark}
Cao, Fayyazuddin Ljungberg and Liu \cite{[CFL]} proved Li-Yau type gradient
estimates for equation \eqref{equ1} with constant function $f$; our results
belong to the elliptic type. An distinct feature of Theorem \ref{main} is
that the gradient estimates hold only assuming the ($\infty$-)Bakry-\'Emery
Ricci tensor is bounded below (without any assumption on $f$).

We also remark that our proof is a little different from Yau's original proof
\cite{[Yau]}. In Yau's case, the proof is to compute the evolution of quantity
$\ln u$, then multiply by a cut-off function and apply the maximum principle.
In our case, we compute the evolution of quantity $u^{1/3}$ instead of $\ln u$.
Moreover, our proof not only applies some arguments of Souplet-Zhang
\cite{[Sou-Zh]}, where the maximum principle in a local space-time supported
set is discussed, but also uses some proof tricks of Bailesteanua-Cao-Pulemotov
\cite{[BCP]}, Li \cite{[Lij]} and Wei-Wylie's comparison theorem \cite{[WW]}.
\end{remark}

\vspace{0.5em}

An immediate application of Theorem \ref{main} is the parabolic Liouville
property for the nonlinear $f$-heat equation. Similar results
appeared in \cite{[Jiang]}.
\begin{theorem}\label{app1}
Let $(M,g,e^{-f}dv)$ be an $n$-dimensional complete smooth metric
measure space with $Ric_f\geq 0$.
\begin{enumerate}
\item [(i)] When $a>0$, if $u(x,t)$ is a positive ancient solution to equation
\eqref{equ1} (that is, a solution defined in all space and negative time)
such that $0<u(x,t)\leq e^{-2}$, then $u$ does not exist.

\item [(ii)] When $a<0$, let $u(x,t)$ be a positive ancient solution
to equation \eqref{equ1}.
If $e^{-2}\leq u(x,t)\leq D$ for some constant $D<1$, then $u$ does not exist;
if $e^{-2}\leq u(x,t)\leq D$ for some constant $D\geq 1$, then $u\equiv 1$.

\item [(iii)] When $a=0$, if $u(x,t)$ is a positive ancient solution to equation
\eqref{weiheat} such that $u(x,t)=o\Big(\big[r^{1/2}(x)+|t|^{1/4}\big]^2\Big)$ near
infinity, then $u$ is constant.
\end{enumerate}
\end{theorem}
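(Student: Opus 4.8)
The plan is to feed Theorem \ref{main} larger and larger space-time cylinders exhausting $M\times(-\infty,t]$, using that $Ric_f\ge 0$ forces $K=0$ and that the threshold value $u=e^{-2}$ makes the nonlinear constants collapse. To show that $\nabla u$ vanishes at an arbitrary point $(x,t)$, I would center the cylinder $Q_{R,T}$ at $x_0=x$ with $t_0=t$ (so that $x$ is the center at the top time slice, where the estimate applies since $t\neq t_0-T$) and send $R,T\to\infty$. Once $|\nabla u|\equiv 0$, the equation \eqref{equ1} degenerates to a first-order ODE in time which I can integrate explicitly, and the behavior as $t\to-\infty$ will decide existence, rigidity, or nonexistence.

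For part (i) ($a>0$) I apply Theorem \ref{main}(i) with $D=e^{-2}$, and for part (ii) ($a<0$) Theorem \ref{main}(ii) with $\delta=e^{-2}$. The key algebraic observation is that $2+\ln(e^{-2})=0$, so with $K=0$ both constants vanish, $c_1(n,0,a,e^{-2})=0=c_2(n,0,a,e^{-2})$. For a fixed center $x_0=x$ the quantity $\alpha$ is a finite constant independent of $R$, so every term on the right of \eqref{heor1} (resp.\ \eqref{heor2}) is $O(R^{-1/2})+O(T^{-1/2})$ and tends to $0$ as $R,T\to\infty$; hence $|\nabla u|(x,t)=0$ at each point, and $u=u(t)$ solves $u'=a\,u\ln u$. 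The substitution $w=\ln u$ linearizes this to $w'=aw$, so $w(t)=w(t_1)e^{a(t-t_1)}$. If $a>0$ then $w(t)\to 0$ (i.e.\ $u\to 1$) as $t\to-\infty$, contradicting $u\le e^{-2}<1$ and ruling out such a solution. If $a<0$ then $e^{a(t-t_1)}\to+\infty$ as $t\to-\infty$, so the bound $e^{-2}\le u\le D$ forces $w\equiv 0$, i.e.\ $u\equiv 1$; this is consistent with the hypotheses exactly when $D\ge 1$ and impossible when $D<1$, which is the stated dichotomy.

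For part (iii), $a=0$ and I would apply Theorem \ref{main}(i), for which $c_1(n,0,0,D)=0$. The new feature is that $D=D_{R,T}:=\sup_{Q_{R,T}}u$ now grows with the cylinder. Writing $p$ for the base point of the growth hypothesis, on $Q_{R,T}$ centered at $x$ one has $d(y,p)\le R+d(x,p)$ and $|s|\le|t|+T$, so the assumption $u=o\big([r^{1/2}+|t|^{1/4}]^2\big)$ yields $D_{R,T}=o\big([R^{1/2}+T^{1/4}]^2\big)$. The correct scaling is $T=R^2$: then $\sqrt{D_{R,R^2}}=o(R^{1/2})$, while the bracket in \eqref{heor1} is $O(R^{-1/2})$, so their product tends to $0$ and again $\nabla u\equiv 0$. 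With $a=0$ and $u=u(t)$ the equation reduces to $u'=\Delta_f u=0$, so $u$ is constant in time as well, hence constant.

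Granting Theorem \ref{main}, the estimates are routine; the two points I would stress are conceptual. First, the threshold $e^{-2}$ is chosen precisely so that $2+\ln(e^{-2})=0$ annihilates the nonlinear contribution and isolates the linear behavior $w'=aw$, which is what makes the sign of $a$ (and the size of $D$) dictate the trichotomy. Second, in part (iii) the parabolic scaling $T\sim R^2$ is exactly what balances the growth of $\sqrt{D}$ against the $R^{-1/2}$ decay coming from the $\alpha$-term. The main technical care lies in justifying that $\alpha$ is finite and $R$-independent once the center is fixed — which is built into the Wei-Wylie comparison underlying Theorem \ref{main} — and, in the ancient-solution step, in arguing that a globally defined solution of $w'=aw$ that stays bounded on $(-\infty,t]$ is forced into the claimed limit as $t\to-\infty$.
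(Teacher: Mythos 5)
Your proposal is correct and takes essentially the same route as the paper: apply Theorem \ref{main} with $K=0$ on exhausting cylinders (the paper takes $B(x_0,R)\times(t_0-R^2,t_0]$, and in all three cases the choice $D=e^{-2}$, $\delta=e^{-2}$, or the growth hypothesis makes the constants $c_1$, $c_2$ vanish or the product $\sqrt{D}\cdot O(R^{-1/2})$ tend to zero), conclude $\nabla u\equiv 0$, and then settle each case by integrating $u'=au\ln u$ and examining the limit as $t\to-\infty$. The only cosmetic difference is that the paper writes the ODE solution directly as $u=\exp(ce^{at})$ rather than linearizing via $w=\ln u$.
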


\vspace{0.5em}

Theorem \ref{app1} immediately implies the following result.
\begin{corollary}\label{app1b}
Let $(M,g,e^{-f}dv)$ be an $n$-dimensional closed smooth metric measure space
with $Ric_f\geq 0$. If positive smooth function $u(x)$ achieves the weighted
log-Sobolev constant $S_M$ and satisfies
\[
\int_M u^2 e^{-f}dv=V_f(M),
\]
then $u(x)>e^{-2}$.
\end{corollary}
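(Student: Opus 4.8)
The plan is to recognize that the hypotheses on $u$ force it to solve the elliptic (steady-state) version of \eqref{equ1}, and then to feed that static solution into the parabolic Liouville Theorem \ref{app1}. Concretely, since $u$ attains the weighted log-Sobolev constant $S_M$ subject to $\int_M u^2 e^{-f}dv = V_f(M)$, the variational computation already carried out in the introduction shows that $u$ satisfies \eqref{sobl}, namely $\Delta_f u + S_M\,u\ln u^2 = 0$. Writing $\ln u^2 = 2\ln u$, this is exactly $\Delta_f u + a\,u\ln u = 0$ with $a = 2S_M$. Because $S_M$ is by definition the smallest positive constant in the log-Sobolev inequality, $a = 2S_M > 0$, so we are squarely in case (i).

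Next I would view the time-independent function $u$ as a solution $u(x,t) := u(x)$ of the full equation \eqref{equ1} on $M \times (-\infty,0]$: since $\partial_t u = 0$ and $\Delta_f u + a\,u\ln u = 0$ hold identically, $u$ is a positive solution of \eqref{equ1} for all negative times, that is, a positive ancient solution. The underlying space is a closed manifold, hence complete, and $Ric_f \ge 0$ by hypothesis, so every structural assumption of Theorem \ref{app1} is met with $a > 0$. Theorem \ref{app1}(i) asserts that no positive ancient solution of \eqref{equ1} with $a>0$ can satisfy $0 < u \le e^{-2}$. Since our maximizer $u$ is a genuinely existing positive ancient solution, it cannot obey $0 < u \le e^{-2}$, and therefore $u > e^{-2}$, which is the claim.

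The substance of the argument lies entirely in this reduction, and the threshold $e^{-2}$ is not accidental: it is precisely the value $D = e^{-2}$ at which the logarithmic constant $c_1(n,K,a,D) = \max\{2(n-1)K + a(2 + \ln D),\,0\}$ from Theorem \ref{main} degenerates to $0$ when $K = 0$, which is the mechanism driving the Liouville property in Theorem \ref{app1}(i). The step I expect to require the most care is the logical passage from the theorem to the stated conclusion: Theorem \ref{app1}(i) is a non-existence statement under the \emph{global} bound $u \le e^{-2}$, so I would set up the final step as a contradiction — assume $0 < u \le e^{-2}$ on all of $M$, invoke non-existence, and conclude that this bound must fail — rather than as a direct pointwise deduction. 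I would also record that a genuine maximizer is non-constant (otherwise the Rayleigh quotient defining $S_M$ degenerates), so that the static solution is a nontrivial object to which the ancient-solution hypothesis truly applies.
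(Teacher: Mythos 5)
Your proposal is correct and follows essentially the same route as the paper: identify $u$ as a static (hence ancient) positive solution of \eqref{sobl}, i.e.\ of \eqref{equ1} with $a=2S_M>0$, and derive a contradiction from Theorem \ref{app1}(i) under the assumption $0<u\leq e^{-2}$. The only caveat — which your write-up shares with the paper's own two-line proof — is that strictly speaking the contradiction refutes the \emph{global} bound $u\leq e^{-2}$ and hence directly yields only $\sup_M u>e^{-2}$ rather than the pointwise inequality at every point; but since this is exactly how the paper argues, your proof matches it.
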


\vspace{0.5em}

For more interesting special cases and applications of Theorem \ref{main}, see
Section \ref{sec3} for furthermore discussion.

\

Our second result gives a Souplet-Zhang's elliptic gradient estimate for
positive solutions to the nonlinear $f$-heat equation \eqref{equ1}. The
proof mainly adopts the arguments of Bailesteanua-Cao-Pulemotov \cite{[BCP]},
Souplet-Zhang \cite{[Sou-Zh]} and Brighton \cite{[Bri]} (see also \cite{[Wu2]}).
\begin{theorem}\label{main0}
Let $(M,g,e^{-f}dv)$ be an $n$-dimensional complete smooth metric measure
space. For any point $x_0\in M$ and $R\geq 2$, $Ric_f\geq-(n-1)K$ for some
constant $K\geq0$ in $B(x_0,R)$. Let $0<u(x,t)\leq D$ for some constant
$D$, be a smooth solution to $f$-heat equation \eqref{equ1} in
$Q_{R,T}\equiv B(x_0,R)\times[t_0-T,t_0]\subset M\times(-\infty,\infty)$,
where $t_0\in \mathbb{R}$ and $T>0$.
\begin{enumerate}
\item [(i)] If $a\geq0$, then there exists a constant $c(n)$ such that
\begin{equation}\label{heork1}
\frac{|\nabla u|}{u}\leq c(n)\left(\sqrt{\frac{1{+}|\alpha|}{R}}+\frac{1}{\sqrt{t-t_0+T}}+\sqrt{K}+\sqrt{a(\kappa+1)}\right)
\left(1+\ln \frac Du\right)
\end{equation}
in $Q_{R/2, T}$ with $t\neq t_0-T$, where $\kappa=\max\{|\ln D|, 1\}$.

\item [(ii)] If $a<0$, then there exists a constant $c(n)$ such that
\begin{equation}\label{heork2}
\frac{|\nabla u|}{u}\leq c(n)\left(\sqrt{\frac{1{+}|\alpha|}{R}}{+}\frac{1}{\sqrt{t-t_0+T}}{+}\sqrt{K}{+}\sqrt{c_3(n,a,K)}{+}\sqrt{-a\kappa}\right)
\left(1{+}\ln \frac Du\right)
\end{equation}
in $Q_{R/2, T}$ with $t\neq t_0-T$. where $c_3(n,a,K)=\max\{a+(n-1)K, 0\}$, and
$\kappa=\max\{|\ln D|, 1\}$.
\end{enumerate}
Here, $\alpha:=\max_{\{x|d(x,x_0)=1\}}\Delta_f\,r(x)$,
where $r(x)$ is the distance function to $x$ from base point $x_0$.
\end{theorem}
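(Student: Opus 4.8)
The plan is to adapt the Souplet--Zhang localization argument to the weighted setting, using the logarithmic substitution that linearizes the correction term $au\ln u$. First I would set $w=\ln(u/D)$, so that $w\le 0$ on $Q_{R,T}$, and divide \eqref{equ1} by $u$ to obtain the evolution equation
\[
w_t=\Delta_f w+|\nabla w|^2+a(w+\ln D).
\]
The quantity to estimate is
\[
\omega:=\frac{|\nabla w|^2}{(1-w)^2},
\]
because $\sqrt{\omega}=\dfrac{|\nabla u|/u}{1+\ln(D/u)}$, so a pointwise bound $\sqrt{\omega}\le(\cdots)$ on $Q_{R/2,T}$ is exactly \eqref{heork1}--\eqref{heork2}. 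The structural feature I would exploit throughout is that $w\le 0$ forces $1-w\ge 1$, $|1/(1-w)|\le 1$ and $|w/(1-w)|<1$; these elementary bounds are what ultimately tame the nonlinear correction.

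Next I would compute the evolution of $\omega$ under $\square:=\Delta_f-\partial_t$. Applying the weighted Bochner formula \eqref{weighBoch} to $|\nabla w|^2$, substituting $\Delta_f w=w_t-|\nabla w|^2-a(w+\ln D)$ together with its gradient, and using $Ric_f\ge-(n-1)K$, one is led to a differential inequality of the schematic form
\[
\square\omega\ge\frac{2w}{1-w}\langle\nabla w,\nabla\omega\rangle+2(1-w)\omega^2-2(n-1)K\,\omega-2a\,\omega-\frac{2a(w+\ln D)}{1-w}\,\omega.
\]
The last two terms are the genuinely new contributions: the factor $-a|\nabla w|^2$ inside $\langle\nabla\Delta_f w,\nabla w\rangle$ produces $-2a\omega$, while $\partial_t(1-w)^{-2}$ produces $-2a(w+\ln D)(1-w)^{-1}\omega$. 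Using $|w/(1-w)|\le 1$, $|1/(1-w)|\le 1$ and $|\ln D|\le\kappa$, the second of these is bounded in absolute value by $2|a|(\kappa+1)\omega$. The sign of $a$ then decides the behaviour of the term $-2a\omega$: for $a\ge 0$ it is unfavorable and merges with the curvature loss into a multiple of $a(\kappa+1)\omega$, giving the factor $\sqrt{a(\kappa+1)}$; for $a<0$ it is favorable and combines as $-2\big(a+(n-1)K\big)\omega$, which is harmless when $a+(n-1)K\le 0$ and otherwise contributes $\sqrt{c_3}$ with $c_3=\max\{a+(n-1)K,0\}$, while the residual $\ln D$ piece yields $\sqrt{-a\kappa}$. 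This is the origin of the case distinction in the statement.

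I would then run the localized maximum-principle argument. Choose a space-time cutoff $\psi(x,t)=\varphi\big(r(x)/R\big)\,\eta(t)$, with $\varphi$ smooth, nonincreasing, equal to $1$ on $[0,1/2]$, vanishing outside $[0,1]$ and satisfying $|\varphi'|^2/\varphi\le C$ and $\varphi''\ge-C$, and with $\eta$ chosen so that the temporal cutoff generates the $1/\sqrt{t-t_0+T}$ term. At an interior maximum of $\psi\omega$ on $Q_{R,T}$ one has $\nabla(\psi\omega)=0$, $\Delta_f(\psi\omega)\le 0$ and $\partial_t(\psi\omega)\ge 0$, hence $\square(\psi\omega)\le 0$. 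To bound $\Delta_f\psi$ I would invoke Wei--Wylie's comparison theorem, which under $Ric_f\ge-(n-1)K$ controls $\Delta_f r$ for $r\ge 1$ in terms of $\sqrt{K}$ and $\alpha:=\max_{d(x,x_0)=1}\Delta_f r$, \emph{without} any hypothesis on $f$; this produces the $\sqrt{(1+|\alpha|)/R}$ and $\sqrt{K}$ contributions. Substituting the evolution inequality, rewriting the gradient term via $\psi\nabla\omega=-\omega\nabla\psi$ at the maximum, and absorbing every term linear in $\omega$ into the dominant $2(1-w)\omega^2\ge 2\omega^2$ by Young's inequality, I obtain a quadratic inequality for $(\psi\omega)$ at the maximum point; solving it and restricting to $\psi\equiv 1$ on $Q_{R/2,T}$ finishes the proof.

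The step I expect to be the main obstacle is the bookkeeping of the nonlinear $a$-terms in the evolution of $\omega$: one must verify that, after the substitution $w=\ln(u/D)$, every contribution of $a(w+\ln D)$ is either absorbed into $(1-w)\omega^2$ or controlled by a multiple of $|a|(\kappa+1)\omega$ using only $w\le 0$, and one must follow the sign of $a$ carefully in order to separate the favorable regime $a+(n-1)K\le 0$ (where $c_3=0$) from the unfavorable one. By comparison, the Calabi argument needed to handle the non-smoothness of $r$ on the cut locus and the explicit choice of $\eta$ are routine once this estimate is in place.
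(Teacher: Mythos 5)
Your proposal is correct and takes essentially the same route as the paper: your substitution $w=\ln(u/D)$ and test quantity $\omega=|\nabla w|^2/(1-w)^2$ are exactly the paper's $g-\ln D$ and $|\nabla g|^2/(\mu-g)^2$ with $\mu=1+\ln D$, and your evolution inequality coincides term by term with Lemma \ref{Lem2}. The subsequent localization via a cutoff, the maximum-principle argument with Wei--Wylie's $f$-Laplacian comparison for $r\geq 1$, the absorption of the linear terms into $2(1-w)\omega^2$ by Young's inequality using $1-w\geq 1$ and $|g/(\mu-g)|\lesssim\kappa$, and the sign analysis in $a$ producing $\sqrt{a(\kappa+1)}$, $\sqrt{c_3}$ and $\sqrt{-a\kappa}$ all match the proof in Section \ref{sec4}.
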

If $a=0$, theorem recovers the result in \cite{[Wu2]}. We point out
that, similar to Theorem \ref{main}, gradient estimates of Theorem
\ref{main0} also hold provided that only the Bakry-Emery Ricci tensor
is bounded below.

\begin{remark}
In \cite{[Wu2010]} the author proved similar estimates when $m$-Bakry-\'Emery
Ricci tensor is bounded below. He also remarked that $m$-Bakry-\'Emery Ricci
tensor could be replaced by ($\infty$-)Bakry-\'Emery Ricci tensor (see
Remark 1.3 (ii) in \cite{[Wu2010]}). Professor Xiang-Dong Li pointed out
to me that the remark is not accurate because of the lack of global $f$-Laplacian
comparison except some special constraint of $f$ is given. However, Theorem
\ref{main0} corrects my previous remark and provides an answer to this question.
\end{remark}

The rest of this paper is organized as follows. In Section \ref{sec2}, we
will give some auxiliary lemmas and introduce a space-time cut-off function.
These results are prepared to prove Theorem \ref{main} and Theorem \ref{main0}.
In Section \ref{sec3}, we will give completely detail proofs of Theorem
\ref{main} by the classical Yau's gradient estimate technique. Then we will
apply  Theorem \ref{main} to prove Theorem \ref{app1} and Corollary \ref{app1b}.
Meanwhile we will also discuss some special cases of Theorem \ref{main}.
In Section \ref{sec4}, we will adopt the arguments of Theorem 1.1 in \cite{[Wu2]}
to prove Theorem \ref{main0}.

\section{Basic lemmas}\label{sec2}
In this section, we will give some useful lemmas, which are prepared to prove
Theorem \ref{main} and Theorem \ref{main0} in the following sections.
Consider the nonlinear $f$-heat equation
\begin{equation}\label{maineq}
\frac{\partial u}{\partial t}=\Delta_f\, u+au\ln u,
\end{equation}
where $a$ is a real constant, on an $n$-dimensional complete smooth metric measure
space $(M,g,e^{-f}dv)$. For any point $x_0\in M$ and any $R>0$, let
\[
0<u(x,t)\leq D
\]
for some constant $D$, be a smooth solution to \eqref{maineq} in
$Q_{R,T}:\equiv B(x_0,R)\times[t_0-T,t_0]\subset M\times(-\infty,\infty)$,
where $t_0\in \mathbb{R}$ and $T>0$.

\vspace{0.5em}

Similar to \cite{[Lij], [LiZhu]}, we introduce a new smooth function
\[
h(x,t):=u^{1/3}(x,t)
\]
in $Q_{R,T}$. Then
\[
0<h(x,t)\leq D^{1/3}
\]
in $Q_{R,T}$. By \eqref{maineq}, $h(x,t)$ satisfies
\begin{equation}\label{mequk}
\left(\Delta_f-\frac{\partial}{\partial t}\right)h+2h^{-1}|\nabla h|^2+ah\ln h=0.
\end{equation}
Using above, we derive the following evolution formula, which is a generalization
of Lemma 2.1 in \cite{[Jiang]}.
\begin{lemma}\label{Le11}
Let $(M,g,e^{-f}dv)$ be an $n$-dimensional complete smooth metric measure space. For any
point $x_0\in M$ and $R>0$, $Ric_f\geq-(n-1)K$ for some constant $K\geq0$ in $B(x_0,R)$.
Let $0<u(x,t)\leq D$ for some constant $D$, be a smooth solution to \eqref{maineq} in
$Q_{R,T}$. Let
\[
\omega(x,t):=h\cdot|\nabla h|^2,
\]
where $h:=u^{1/3}$. For any $(x,t)\in Q_{R,T}$,
\begin{enumerate}
\item [(i)]if $a\geq0$, then $\omega$ satisfies
\[
\left(\Delta_f-\frac{\partial}{\partial t}\right)\omega\geq -4h^{-1}\left\langle \nabla
h,\nabla\omega\right\rangle+4h^{-3}\omega^2-\left[2(n-1)K+a\ln D+2a\right]\omega.
\]

\item [(ii)]if $a<0$, further assuming that $0<\delta\leq u(x,t)\leq D$ for some constant
$\delta>0$, then $\omega$ satisfies
\[
\left(\Delta_f-\frac{\partial}{\partial t}\right)\omega\geq-4h^{-1}\left\langle \nabla
h,\nabla\omega\right\rangle+4h^{-3}\omega^2-\left[2(n-1)K+a\ln \delta+2a\right]\omega.
\]
\end{enumerate}
\end{lemma}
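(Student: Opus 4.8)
The plan is to compute $\left(\Delta_f-\partial_t\right)\omega$ directly from the equation \eqref{mequk} satisfied by $h=u^{1/3}$ together with the weighted Bochner formula \eqref{weighBoch}, keeping everything as an exact identity and only at the very end discarding the nonnegative Hessian term and inserting the lower bounds on $Ric_f$ and on $u$. Write $L:=\Delta_f-\partial_t$, so that \eqref{mequk} reads $Lh=-2h^{-1}|\nabla h|^2-ah\ln h$. I would first record the evolution of $|\nabla h|^2$: applying \eqref{weighBoch} to $h$ and subtracting $\partial_t|\nabla h|^2=2\langle\nabla h,\nabla(\partial_t h)\rangle$ gives
\[
L|\nabla h|^2=2|\nabla^2 h|^2+2\langle\nabla(Lh),\nabla h\rangle+2Ric_f(\nabla h,\nabla h).
\]
Differentiating the explicit expression for $Lh$ then produces inside $\langle\nabla(Lh),\nabla h\rangle$ the three pieces $2h^{-2}|\nabla h|^4$, $-2h^{-1}\langle\nabla|\nabla h|^2,\nabla h\rangle$ and $-a(\ln h+1)|\nabla h|^2$.

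Next I would expand $L\omega$ by the Leibniz rule $\Delta_f(h\cdot g)=h\Delta_f g+g\Delta_f h+2\langle\nabla h,\nabla g\rangle$ with $g=|\nabla h|^2$, together with the product rule for $\partial_t$, to get
\[
L\omega=h\,L|\nabla h|^2+|\nabla h|^2\,Lh+2\langle\nabla h,\nabla|\nabla h|^2\rangle.
\]
Substituting the two preceding identities and collecting terms, the $|\nabla h|^4$ contributions combine into a positive multiple of $h^{-3}\omega^2$, while the leftover $\langle\nabla|\nabla h|^2,\nabla h\rangle$ terms are converted into the drift term $h^{-1}\langle\nabla h,\nabla\omega\rangle$ using the identity $\langle\nabla\omega,\nabla h\rangle=|\nabla h|^4+h\langle\nabla|\nabla h|^2,\nabla h\rangle$, which follows from $\nabla\omega=|\nabla h|^2\nabla h+h\nabla|\nabla h|^2$. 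At this stage one has an exact formula whose only non-sign-definite leftovers are the Hessian term $2h|\nabla^2 h|^2\geq0$, which I simply drop, and the curvature term $2hRic_f(\nabla h,\nabla h)$, bounded below by $-2(n-1)K\,\omega$ via the hypothesis $Ric_f\geq-(n-1)K$.

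The one conceptual point — and the only place the two cases differ — is the logarithmic nonlinearity. Collecting the $a$-terms yields a contribution $-(3a\ln h+2a)\,\omega$; since $\ln h=\tfrac13\ln u$, this is exactly $-(a\ln u+2a)\,\omega$. Because $\omega\geq0$, it remains to bound $-a\ln u$ below by a constant. When $a\geq0$ the upper bound $u\leq D$ gives $-a\ln u\geq-a\ln D$, which is precisely why no lower bound on $u$ is needed in (i). When $a<0$ the inequality reverses, so I must instead use the lower bound $\delta\leq u$ to obtain $-a\ln u\geq-a\ln\delta$, which accounts for the extra hypothesis $u\geq\delta$ in (ii). Feeding these bounds in produces the coefficient $2(n-1)K+a\ln D+2a$ (respectively $2(n-1)K+a\ln\delta+2a$) and finishes the estimate.

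I expect the main obstacle to be purely organizational rather than conceptual: several $\langle\nabla|\nabla h|^2,\nabla h\rangle$ and $|\nabla h|^4$ terms must be carried through the Bochner step, the Leibniz step, and the change of variables to $\omega$ without arithmetic slips. My safeguard is to keep the whole computation an \emph{equality} until the final line, and only then discard $2h|\nabla^2 h|^2\geq0$, invoke the Ricci lower bound, and apply the one-sided bound on $\ln u$; the curvature and Hessian manipulations are entirely standard, so the only genuine decision is which bound on $u$ to use in each sign regime of $a$.
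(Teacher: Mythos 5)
Your computation reproduces the paper's exact identity correctly: expanding $L\omega=hL|\nabla h|^2+|\nabla h|^2Lh+2\langle\nabla h,\nabla|\nabla h|^2\rangle$ with the weighted Bochner formula and \eqref{mequk} gives
\[
L\omega=2h|\nabla^2h|^2+2Ric_f(\nabla h,\nabla h)\,h-2\langle\nabla|\nabla h|^2,\nabla h\rangle+2h^{-1}|\nabla h|^4-(3a\ln h+2a)\,\omega,
\]
which is exactly what the paper obtains in index notation, and your treatment of the logarithmic term (using $u\leq D$ when $a\geq0$ and $u\geq\delta$ when $a<0$) matches the paper. The gap is in the last estimation step. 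If you \emph{discard} $2h|\nabla^2h|^2\geq0$ and convert the entire cross term via $\langle\nabla|\nabla h|^2,\nabla h\rangle=h^{-1}\bigl(\langle\nabla\omega,\nabla h\rangle-|\nabla h|^4\bigr)$, the coefficients are forced: you get
\[
L\omega\geq-2h^{-1}\langle\nabla h,\nabla\omega\rangle+4h^{-3}\omega^2-\bigl[2(n-1)K+3a\ln h+2a\bigr]\omega,
\]
with drift coefficient $-2$, not the $-4$ asserted in the lemma. Since $\langle\nabla h,\nabla\omega\rangle$ has no sign, the two inequalities are incomparable, so your argument proves a variant of the lemma rather than the lemma itself.

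To land on the stated coefficient $-4$ you must \emph{use} the Hessian term rather than drop it: split $-2\langle\nabla|\nabla h|^2,\nabla h\rangle=-4h_ih_jh_{ij}=+4h_ih_jh_{ij}-8h_ih_jh_{ij}$ and absorb the first piece together with the Hessian and quartic terms via the completed square
\[
2h|h_{ij}|^2+4h_ih_jh_{ij}+2h^{-1}|\nabla h|^4=2h^{-1}\bigl|h\,h_{ij}+h_ih_j\bigr|^2\geq0,
\]
then convert only the remaining $-8h_ih_jh_{ij}=-4h^{-1}\langle\nabla\omega,\nabla h\rangle+4h^{-3}\omega^2$. This is precisely what the paper does. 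I would add that your variant is not useless: in the application (the maximum-principle argument for Theorem \ref{main}) one has $\nabla(\psi\omega)=0$ at the maximum point, so the drift coefficient is immaterial there; but as a proof of Lemma \ref{Le11} as stated, the step ``I simply drop $2h|\nabla^2h|^2$'' is the step that fails.
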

\begin{proof}
Following the computation method of \cite{[Li-Yau]},
let $e_1, e_2,..., e_n$ be a local orthonormal frame field on $M^n$.
We adopt the notation that subscripts in $i$, $j$, and $k$, with $1\leq i, j, k\leq n$,
mean covariant differentiations in the $e_i$, $e_j$ and $e_k$, directions respectively.

Differentiating $\psi$ in the direction of $e_i$, we have
\begin{equation}\label{emmpro1}
\omega_j=h_j\cdot|\nabla h|^2+2hh_ih_{ij}
\end{equation}
and once more differentiating $\psi$ in the direction of $e_i$,
\[
\Delta\omega=2h|h_{ij}|^2+2hh_ih_{ijj}+4h_ih_jh_{ij}+h^2_ih_{jj},
\]
where $h_i:=\nabla_i h$ and $h_{ijj}:=\nabla_j\nabla_j\nabla_i h$,
etc. Hence we have
\begin{equation*}
\begin{aligned}
\Delta_f\,\omega&=\Delta\omega-\langle\nabla f,\nabla\omega\rangle\\
&=2h|h_{ij}|^2+2hh_ih_{ijj}+4h_ih_jh_{ij}+h^2_ih_{jj}-2hh_{ij}h_if_j-h^2_ih_jf_j\\
&=2h|h_{ij}|^2+2hh_i(\Delta_f h)_i+2hRic_f(\nabla h, \nabla h)+4h_ih_jh_{ij}+h^2_i\Delta_fh.
\end{aligned}
\end{equation*}
By \eqref{mequk}, we also have
\begin{equation*}
\begin{aligned}
\frac{\partial\omega}{\partial t}&=2h\nabla_ih\cdot\nabla_i\left(\Delta_f h+2h^{-1} h_j^2
+ah\ln h\right)+h_t h_j^2\\
&=2h\nabla h \nabla \Delta_f h+8h_ih_jh_{ij}-4h^{-1}h_i^4 +2ah(\ln h+1)h_i^2\\
&\quad+h^2_i\Delta_f h+2h^{-1} |\nabla h|^4 +ah\ln h\cdot h^2_i\\
&=2h\nabla h \nabla \Delta_f h+8h_ih_jh_{ij}-2h^{-1}h_i^4+(3\ln h+2)ahh_i^2+h^2_i\Delta_f h.
\end{aligned}
\end{equation*}
Combining the above two equations, we get
\begin{equation*}
\begin{aligned}
\left(\Delta_f-\frac{\partial}{\partial t}\right)\omega&=2h|h_{ij}|^2
+2h {Ric_f}_{ij} h_ih_j-4h_ih_jh_{ij}\\
&\quad+2h^{-1}h_i^4-(3\ln h+2)ahh_i^2.
\end{aligned}
\end{equation*}
Since $Ric_f\geq-(n-1)K$ for some constant $K\geq0$, then
\begin{equation*}
\begin{aligned}
\left(\Delta_f-\frac{\partial}{\partial t}\right)\omega&\geq2h|h_{ij}|^2+4h_ih_jh_{ij}+2h^{-1}h^4_i-2(n-1)K\,\omega\\
&\quad-8h_ih_jh_{ij}-(3\ln h+2)a\omega.
\end{aligned}
\end{equation*}
Using
\[
2h|h_{ij}|^2+4h_ih_jh_{ij}+2h^{-1}h^4_i\geq 0,
\]
we further get
\begin{equation}\label{dge2}
\left(\Delta_f-\frac{\partial}{\partial t}\right)\omega\geq-8h_ih_jh_{ij}-\left[2(n-1)K+3a\ln h+2a\right]\omega.
\end{equation}
Since \eqref{emmpro1} implies
\[
\omega_jh_j=2hh_ih_jh_{ij}+h_i^4,
\]
using this, \eqref{dge2} can be written by
\[
\left(\Delta_f-\frac{\partial}{\partial t}\right)\omega\geq4h^{-3}\omega^2-4h^{-1}\left\langle \nabla
h,\nabla\omega\right\rangle-\left[2(n-1)K+3a\ln h+2a\right]\omega.
\]
Finally, we notice that if $a\geq 0$, then $0<h\leq D^{1/3}$ and hence
\[
\ln h\leq 1/3\ln D.
\]
If $a<0$, then $\delta^{1/3}\leq h\leq D^{1/3}$ and hence
\[
1/3\ln\delta\leq \ln h\leq 1/3\ln D.
\]
The above two cases imply the desired results.
\end{proof}

\

For equation \eqref{maineq}, if we introduce another new function
\[
g=\ln u,
\]
then $g$ satisfies
\begin{equation}\label{lemequ}
\left(\Delta_f-\frac{\partial}{\partial t}\right)g+|\nabla g|^2+ag=0.
\end{equation}
Using this, we can get the following lemma, which is also a generalization of
previous results in \cite{[Sou-Zh],[Wu2010], [Wu2]}.
\begin{lemma}\label{Lem2}
Let $(M,g,e^{-f}dv)$ be an $n$-dimensional complete smooth metric measure space.
For any point $x_0\in M$ and $R>0$, $Ric_f\geq-(n-1)K$ for some constant
$K\geq0$ in $B(x_0,R)$. Let $0<u(x,t)\leq D$ be a smooth solution to equation
\eqref{maineq} in $Q_{R,T}$. Let $g:=\ln u$ and $\mu:=1+\ln D$. Then for all
$(x,t)\in Q_{R,T}$, the function
\[
\omega:=\left|\nabla\ln(\mu-g)\right|^2=\frac{|\nabla g|^2}{(\mu-g)^2}
\]
satisfies
\begin{equation}
\begin{aligned}\label{lemmaequ3}
\left(\Delta_f-\frac{\partial}{\partial t}\right)\omega&\geq\frac{2(g-\ln D)}{\mu-g}\left\langle \nabla
g,\nabla\omega\right\rangle\\
&\quad+2(\mu-g)\omega^2-2(a+(n-1)K)\omega-\frac{2ag}{\mu-g}\omega.
\end{aligned}
\end{equation}
\end{lemma}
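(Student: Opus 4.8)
The plan is to reduce the estimate to the weighted Bochner formula \eqref{weighBoch} applied to $v := \ln(\mu - g)$, for which $\omega = |\nabla v|^2$ by definition. Write $\phi := \mu - g$, which is positive (indeed $\phi \geq 1$) since $g = \ln u \leq \ln D < \mu$. First I would record the parabolic equation satisfied by $v$. From \eqref{lemequ} one has $\left(\Delta_f - \frac{\partial}{\partial t}\right)\phi = |\nabla g|^2 + ag$, while the chain rule gives $\Delta_f v = \phi^{-1}\Delta_f\phi - \phi^{-2}|\nabla\phi|^2$ and $\frac{\partial}{\partial t}v = \phi^{-1}\frac{\partial}{\partial t}\phi$. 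Combining these, and using $|\nabla\phi|^2 = |\nabla g|^2 = \phi^2\omega$, yields
\[
\left(\Delta_f - \frac{\partial}{\partial t}\right)v = (\phi - 1)\,\omega + \frac{ag}{\phi}.
\]

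Next I would apply the weighted Bochner formula \eqref{weighBoch} to $v$ together with the identity $\frac12\frac{\partial}{\partial t}|\nabla v|^2 = \langle \nabla \frac{\partial}{\partial t}v, \nabla v\rangle$, giving
\[
\tfrac12\left(\Delta_f - \frac{\partial}{\partial t}\right)\omega = |\nabla^2 v|^2 + \Big\langle \nabla\Big(\Delta_f - \tfrac{\partial}{\partial t}\Big)v,\ \nabla v\Big\rangle + Ric_f(\nabla v, \nabla v).
\]
The core of the argument is to expand the middle term by differentiating the formula just obtained. Using $\nabla\phi = -\nabla g$, $\nabla v = -\phi^{-1}\nabla g$, and the identity $\mu = \phi + g$ (which gives $\nabla(g/\phi) = \mu\,\phi^{-2}\nabla g$), the inner product splits into three pieces: a cross term $-\tfrac{\phi-1}{\phi}\langle\nabla\omega,\nabla g\rangle$, a quadratic term $\phi\,\omega^2$, and a term $-a\mu\,\phi^{-1}\omega$.

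Finally I would rewrite these using $\phi = \mu - g$. The substitution $\phi - 1 = \ln D - g$ turns the cross term into $\tfrac{g-\ln D}{\mu - g}\langle\nabla g,\nabla\omega\rangle$, the quadratic term becomes $(\mu - g)\omega^2$, and $a\mu/\phi = a + ag/(\mu - g)$ splits the last term as $-a\omega - \tfrac{ag}{\mu-g}\omega$. Bounding $Ric_f(\nabla v,\nabla v) \geq -(n-1)K\,\omega$, discarding the nonnegative term $|\nabla^2 v|^2$, and multiplying through by $2$ then assembles exactly \eqref{lemmaequ3}. The computation is essentially routine; the only point demanding care is the bookkeeping of signs and the two substitutions $\phi - 1 = \ln D - g$ and $\mu = \phi + g$, which are precisely what produce the stated coefficients. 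I expect no genuine obstacle beyond tracking the gradient identities and the repeated inner products against $\nabla v = -\phi^{-1}\nabla g$.
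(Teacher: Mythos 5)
Your proof is correct: the reduction to the weighted Bochner formula applied to $v=\ln(\mu-g)$, the evolution equation $\left(\Delta_f-\frac{\partial}{\partial t}\right)v=(\phi-1)\omega+\frac{ag}{\phi}$ with $\phi=\mu-g$, and the three-way split of $\langle\nabla(\Delta_f-\frac{\partial}{\partial t})v,\nabla v\rangle$ into the cross term, the term $\phi\,\omega^2$, and the term $-a\mu\phi^{-1}\omega$ all check out and reassemble exactly \eqref{lemmaequ3} after using $\phi-1=\ln D-g$ and $a\mu/\phi=a+ag/(\mu-g)$. The paper gives no computation here---it simply cites Lemma 2.1 of \cite{[Wu2010]} with $\alpha=\mu$---and what you supply is precisely the standard Souplet--Zhang-type derivation that reference carries out, so this is the same approach, written out in full.
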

\begin{proof}
The proof of lemma is almost the same as that of \cite{[Wu2010]}. In fact,
in Lemma 2.1 of \cite{[Wu2010]}, if we let $\alpha=1+\ln D$, which means
$\alpha=\mu$, then we have $\delta=1$. Therefore from (2.4) of \cite{[Wu2010]},
we immediately get \eqref{lemmaequ3}.
\end{proof}

\

In the rest of this section, we introduce a smooth cut-off function originated by
Li-Yau \cite{[Li-Yau]} (see also \cite{[BCP]} and \cite{[Wu2]}). This will
also be used in the proof of our theorems.

\begin{lemma}\label{cutoff}
Fix $t_0\in \mathbb{R}$ and $T>0$. For any $\tau\in(t_0-T,t_0]$,
there exists a smooth function
$\bar\psi:[0,\infty)\times[t_0-T,t_0]\to\mathbb R$ such that:
\begin{enumerate}
\item
\[
0\leq\bar\psi(r,t)\leq 1
\]
in $[0,R]\times[t_0-T,t_0]$, and it is supported
in a subset of $[0,R]\times[t_0-T,t_0]$.
\item

\[
\bar\psi(r,t)=1\quad \mathrm{and} \quad\frac{\partial\bar\psi}{\partial r}(r,t)=0
\]
in $[0,R/2]\times[\tau,t_0]$ and $[0,R/2]\times[t_0-T,t_0]$, respectively.
\item
\[
\left|\frac{\partial\bar\psi}{\partial t}\right|\leq\frac{C\bar\psi^{\frac12}}{\tau-(t_0-T)}
\]
in $[0,\infty)\times[t_0-T,t_0]$ for some $C>0$, and $\bar\psi(r,t_0-T)=0$
for all $r\in[0,\infty)$.
\item
\[
-\frac{C_\epsilon\bar\psi^\epsilon}{R}\leq\frac{\partial\bar\psi}{\partial r}\leq 0\quad
\mathrm{and}
\quad \left|\frac{\partial^2\bar\psi}{\partial r^2}\right|\leq\frac{C_\epsilon\bar\psi^\epsilon}{R^2}
\]
in $[0,\infty)\times[t_0-T,t_0]$ for each $\epsilon\in(0,1)$ with some constant
$C_\epsilon$ depending on $\epsilon$.
\end{enumerate}
\end{lemma}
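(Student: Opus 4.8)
The plan is to build $\bar\psi$ as a product $\bar\psi(r,t)=\eta(r)\,\theta(t)$ of a spatial cut-off $\eta$ and a temporal cut-off $\theta$, each obtained by rescaling a fixed one-variable profile, and then to read off the four stated properties from elementary one-variable estimates via the chain rule. The product structure is consistent with property (2), since $\partial_r\bar\psi=\eta'(r)\theta(t)$ vanishes wherever $\eta'$ does, and the $t$- and $r$-dependences decouple cleanly in (3) and (4).

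First I would fix a smooth non-increasing profile $\Phi:[0,\infty)\to[0,1]$ with $\Phi\equiv1$ on $[0,1/2]$ and $\Phi\equiv0$ on $[1,\infty)$, and set $\eta(r):=\Phi(r/R)$. Granting the pair of bounds $|\Phi'|\le C_\epsilon\Phi^\epsilon$ and $|\Phi''|\le C_\epsilon\Phi^\epsilon$ for every $\epsilon\in(0,1)$, the chain rule gives $\partial_r\eta=\Phi'(r/R)/R$ and $\partial_r^2\eta=\Phi''(r/R)/R^2$, whence $-C_\epsilon\eta^\epsilon/R\le\partial_r\eta\le0$ and $|\partial_r^2\eta|\le C_\epsilon\eta^\epsilon/R^2$, which is (4); moreover $\eta\equiv1$ on $[0,R/2]$ and $\partial_r\eta\equiv0$ there, giving the $r$-part of (2). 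Similarly I would fix $\Theta:\mathbb{R}\to[0,1]$ with $\Theta\equiv0$ on $(-\infty,0]$, $\Theta\equiv1$ on $[1,\infty)$, and $|\Theta'|\le C\Theta^{1/2}$, and set $\theta(t):=\Theta\!\left(\frac{t-(t_0-T)}{\tau-(t_0-T)}\right)$; then $\theta(t_0-T)=0$, $\theta\equiv1$ on $[\tau,t_0]$, and $|\theta'(t)|\le C\Theta^{1/2}/(\tau-(t_0-T))=C\theta^{1/2}/(\tau-(t_0-T))$. With $\bar\psi=\eta\theta$, properties (1)--(3) are then immediate: $0\le\bar\psi\le1$ with support in $[0,R]\times[t_0-T,t_0]$ from the two factors; $\bar\psi=1$ on $[0,R/2]\times[\tau,t_0]$ from $\eta\equiv\theta\equiv1$ there; and, since $\eta\le1$ gives $\eta\,\theta^{1/2}\le(\eta\theta)^{1/2}=\bar\psi^{1/2}$, we obtain $|\partial_t\bar\psi|=\eta|\theta'|\le C\bar\psi^{1/2}/(\tau-(t_0-T))$.

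The one genuine point, and the step I expect to be the main obstacle, is producing profiles with the power-type gradient bounds $|\Phi'|\le C_\epsilon\Phi^\epsilon$ for all $\epsilon<1$. The obstruction is that as $\epsilon\uparrow1$ the inequality degenerates to $|\Phi'|\le C\Phi$ near a finite-order zero of $\Phi$, which fails, so $\Phi$ must vanish faster than any polynomial at the right endpoint of its support. I would therefore arrange $\Phi$ to vanish exponentially there, e.g. $\Phi(s)\sim\exp\!\big(-1/(1-s)\big)$ as $s\to1^-$, glued smoothly to the constant $1$ near $s=1/2$. Writing $w=1/(1-s)\to\infty$, one finds $|\Phi'|\sim w^2e^{-w}$ and $\Phi^\epsilon\sim e^{-\epsilon w}$, so $|\Phi'|/\Phi^\epsilon\sim w^2e^{-(1-\epsilon)w}\to0$ for each fixed $\epsilon<1$; this yields a finite $C_\epsilon$ (blowing up as $\epsilon\uparrow1$, as permitted), and the same computation with an extra power of $w$ controls $\Phi''$. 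The profile $\Theta$ is easier, since only the single power $1/2$ is required: any smooth $\Theta$ vanishing fast enough at $0$ (again exponentially, say) satisfies $|\Theta'|\le C\Theta^{1/2}$ by the identical estimate. Once these elementary one-variable profiles are in hand, properties (1)--(4) follow from the rescaling and product computations above, completing the construction.
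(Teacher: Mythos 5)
Your construction is correct; the paper itself gives no proof of Lemma \ref{cutoff} (it simply cites Li--Yau and Bailesteanu--Cao--Pulemotov), and your product ansatz with one-variable profiles vanishing to infinite order at the edge of their support is exactly the standard construction behind those references. You correctly isolate the only nontrivial point --- that $|\Phi'|\le C_\epsilon\Phi^\epsilon$ for every $\epsilon<1$ forces faster-than-polynomial decay, which the exponential profile supplies --- and the remaining factor of $\theta$ in property (4) is absorbed by $\theta\le\theta^\epsilon$ exactly as in your $\eta\le\eta^{1/2}$ step for property (3).
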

We remind the readers that Lemma \ref{cutoff} is a little different from
that of \cite{[Li-Yau]} and \cite{[Sou-Zh]}. Here, the cut-off function
was previously used by M. Bailesteanua, X. Cao and A. Pulemotov \cite{[BCP]}.

\section{Proof of Theorem \ref{main}}\label{sec3}
In this section,  we will apply Lemmas \ref{Le11} and \ref{cutoff}, the
localization technique of Souplet-Zhang \cite{[Sou-Zh]}, some tricks of
Bailesteanua-Cao-Pulemotov \cite{[BCP]}, Li \cite{[Lij]}, Brighton
\cite{[Bri]} and Jiang \cite{[Jiang]} to prove Theorem \ref{main}.

\begin{proof}[Proof of Theorem \ref{main}]
We only prove the case (i) $a\geq 0$. The case (ii) $a<0$ is similar.
Pick any number $\tau\in(t_0-T,t_0]$ and choose a cutoff function $\bar\psi(r,t)$
satisfying propositions of Lemma \ref{cutoff}. We will show that
\eqref{heor1} holds at the space-time point $(x,\tau)$ for all $x$ such that
$d(x,x_0)<R/2$, where $R\geq 2$. Since $\tau$ is arbitrary,
the conclusion then follows.

Introduce a cutoff function $\psi:M\times[t_0-T,t_0]\to \mathbb R$, such that
\[
\psi=\bar{\psi}(d(x,x_0),t)\equiv\psi(r,t).
\]
Then, $\psi(x,t)$ is supported in $Q_{R,T}$. Our aim is to estimate
$\left(\Delta_f-\frac{\partial}{\partial t}\right)(\psi\omega)$ and
carefully analyze the result at a space-time point where the function
$\psi\omega$ attains its maximum.

By Lemma \ref{Le11}(i), we can calculate that
\begin{equation}
\begin{aligned}\label{lemdx3}
\left(\Delta_f-\frac{\partial}{\partial t}\right)(\psi\omega)&+\left(4h^{-1}\nabla
h-2\frac{\nabla\psi}{\psi}\right)\cdot\nabla(\psi\omega)\\
&\geq 4h^{-3}\omega^2\psi+4h^{-1}\left\langle \nabla
h,\nabla\psi\right\rangle\omega
-2\frac{|\nabla\psi|^2}{\psi}\omega\\
&\quad+(\Delta_f\psi)\omega-\psi_t\omega-\left[2(n-1)K+a\ln D+2a\right]\psi\omega.
\end{aligned}
\end{equation}
Let $(x_1,t_1)$ be a maximum space-time point for
$\psi\omega$ in the closed set
\[
\left\{(x,t)\in M\times[t_0-T,\tau]\,|d(x,x_0)\leq R\right\}.
\]
Assume that $(\psi\omega)(x_1,t_1)>0$; otherwise, $\omega(x,\tau)\leq0$ and
\eqref{heor1} naturally holds at $(x,\tau)$ whenever $d(x, x_0)<\frac R2$.
Here $t_1\neq t_0-T$, since we assume $(\psi\omega)(x_1,t_1)>0$. We can also
assume that function $\psi(x,t)$ is smooth at $(x_1,t_1)$ due to the
standard Calabi's argument \cite{[Cala]}. Since $(x_1,t_1)$ is a maximum
space-time point, at this point,
\[
\Delta_f(\psi\omega)\leq0,\quad(\psi\omega)_t\geq0
\quad \mathrm{and}\quad\nabla(\psi\omega)=0.
\]
Using these, \eqref{lemdx3} at space-time $(x_1,t_1)$
can be simplified as
\begin{equation}\label{lefor}
\begin{aligned}
4\omega^2\psi\leq&\left(-4h^2\left\langle \nabla
h,\nabla\psi\right\rangle+2\frac{|\nabla\psi|^2}{\psi}h^3\right)\omega
-(\Delta_f\psi)h^3\omega+\psi_th^3\omega\\
&+c_1(n,K,a,D)\cdot\psi h^3\omega,
\end{aligned}
\end{equation}
where $c_1(n,K,a,D):=\max\{2(n-1)K+a\ln D+2a,\,0\}$.

\

We apply \eqref{lefor} to prove the theorem. If $x_1\in B(x_0,1)$,
then $\psi$ is constant in space direction in $B(x_0,R/2)$ according
to our assumption, where $R\geq2$. So at $(x_1,t_1)$, \eqref{lefor} yields
\begin{equation*}
\begin{aligned}
\omega&\leq D\left(\frac 14\cdot\frac{\psi_t}{\psi}+\frac{c_1(n,K,a,D)}{4}\right)\\
&\leq D\left(\frac{C}{\tau-(t_0-T)}+\frac{c_1(n,K,a,D)}{4}\right),
\end{aligned}
\end{equation*}
where we used proposition (3) of Lemma \ref{cutoff}. Since $\psi(x,\tau)=1$
when $d(x,x_0)<R/2$ by the proposition (2) of Lemma \ref{cutoff}, the above
estimate indeed gives
\begin{equation*}
\begin{aligned}
\omega(x,\tau)=(\psi\omega)(x,\tau)
&\leq(\psi\omega)(x_1,t_1)\\
&\leq\omega(x_1,t_1)\\
&\leq D\left(\frac{C}{\tau-(t_0-T)}+\frac{c_1(n,K,a,D)}{4}\right)
\end{aligned}
\end{equation*}
for all $x\in M$ such that $d(x,x_0)<R/2$. By the definition of
$w(x,\tau)$ and the fact that $\tau\in(t_0-T,t_0]$ was chosen
arbitrarily, we prove that
\[
\frac{|\nabla u|}{\sqrt{u}}(x,t)\leq \sqrt{D}
\left(\frac{C}{\sqrt{t-t_0+T}}+\frac{1}{2}\sqrt{c_1(n,K,a,D)}
\right)\]
for all $(x,t)\in Q_{R/2,T}$ with $t\neq t_0-T$. This implies \eqref{heor1}.

\vspace{0.5em}

Now, we assume $x_1\not\in B(x_0,1)$. Since
$Ric_f\geq-(n-1)K$ and $r(x_1,x_0)\geq 1$ in $B(x_0,R)$, we have
the $f$-Laplacian comparison (see Theorem 3.1 in \cite{[WW]})
\begin{equation}\label{gencomp}
\Delta_f\,r(x_1)\leq\alpha+(n-1)K(R-1),
\end{equation}
where $\alpha:=\max_{\{x|d(x,x_0)=1\}}\Delta_f\,r(x)$. This comparison
theorem holds without any grow condition of $f$, which is critical in our
latter proof. Below we will estimate upper bounds
for each term of the right-hand side of \eqref{lefor}, similar to the
arguments of Souplet-Zhang \cite{[Sou-Zh]}. Meanwhile, we also
repeatedly use the Young's inequality
\[
a_1a_2\leq \frac{{a_1}^p}{p}+\frac{{a_2}^q}{q},\quad \forall\,\,\, a_1,a_2,p,q>0
\,\,\,\mathrm{with}\,\,\, \frac 1p+\frac 1q=1.
\]
In the following $c$ denotes a constant depending only on $n$ whose value may change
from line to line.

First, we have the estimates of first term of the right hand side
of \eqref{lefor}:
\begin{equation}
\begin{aligned}\label{term1}
-4h^2\left\langle \nabla
h,\nabla\psi\right\rangle\omega
&\leq 4h^{3/2}\cdot|\nabla\psi|\cdot\omega^{3/2}\\
&\leq 4D^{1/2}\cdot|\nabla\psi|\psi^{-3/4}\cdot(\psi\omega^2)^{3/4}\\
&\leq\frac 35\psi\omega^2+c D^2\frac{|\nabla\psi|^4}{\psi^3}\\
&\leq\frac 35\psi\omega^2+c\frac{D^2}{R^4}.
\end{aligned}
\end{equation}
For the second term of the right hand side of (\ref{lefor}), we have
\begin{equation}
\begin{aligned}\label{term2}
2\frac{|\nabla\psi|^2}{\psi}h^3 \omega
&\leq2D\cdot|\nabla\psi|^2\psi^{-3/2}\cdot\psi^{1/2}\omega \\
&\leq\frac 35\psi\omega^2+c D^2\frac{|\nabla\psi|^4}{\psi^3}\\
&\leq\frac 35\psi\omega^2+c\frac{D^2}{R^4}.
\end{aligned}
\end{equation}
For the third term of the right hand side of \eqref{lefor}, since $\psi$
is a radial function, then at $(x_1,t_1)$, using \eqref{gencomp} we have
\begin{equation}
\begin{aligned}\label{term3}
-(\Delta_f\psi)h^3\omega&=-\left[(\partial_r\psi)\Delta_fr+(\partial^2_r\psi)\cdot
|\nabla r|^2\right]h^3\omega\\
&\leq-\left[\partial_r\psi\left(\alpha+(n-1)K(R-1)\right)
+\partial^2_r\psi\right]h^3\omega\\
&\leq D\left[|\partial^2_r\psi|+\left(|\alpha|+(n-1)K(R-1)\right)|\partial_r\psi|\right]\omega\\
&=D\psi^{1/2}\omega\frac{|\partial^2_r\psi|}{\psi^{1/2}}
+D\left(|\alpha|+(n-1)K(R-1)\right)\psi^{1/2}\omega
\frac{|\partial_r\psi|}{\psi^{1/2}}\\
&\leq\frac 35\psi\omega^2{+}c\,D^2
\left[\left(\frac{|\partial^2_r\psi|}{\psi^{1/2}}\right)^2
{+}\left(\frac{|\alpha|\cdot|\partial_r\psi|}{\psi^{1/2}}\right)^2
{+}\left(\frac{K(R{-}1)|\partial_r\psi|}{\psi^{1/2}}\right)^2\right]\\
&\leq\frac 35\psi\omega^2+c\frac{D^2}{R^4}+c\frac{\alpha^2D^2}{R^2}
+cK^2D^2,
\end{aligned}
\end{equation}
where in the last inequality we used proposition (4) of Lemma \ref{cutoff}.

Then we estimate the fourth term of the right hand side of \eqref{lefor}:
\begin{equation}
\begin{aligned}\label{term4}
|\psi_t|h^3\omega&=\psi^{1/2}\omega\frac{h^3|\psi_t|}{\psi^{1/2}}\\
&\leq\frac 35\left(\psi^{1/2}\omega\right)^2+c
\left(\frac{h^3|\psi_t|}{\psi^{1/2}}\right)^2\\
&\leq\frac 35\psi\omega^2+\frac{cD^2}{(\tau-t_0+T)^2}.
\end{aligned}
\end{equation}

Finally, we estimate the last term of the right hand side of \eqref{lefor}:
\begin{equation}\label{term5}
c_1(n,K,a,D)\psi h^3\omega\leq\frac 35\psi\omega^2+cD^2c_1^2(n,K,a,D).
\end{equation}

\

We now substitute \eqref{term1}-\eqref{term5} into the right hand side of \eqref{lefor},
and get that
\begin{equation}\label{leforfor}
\psi\omega^2\leq c\,D^2\left(\frac{1}{R^4}{+}\frac{\alpha^2}{R^2}+\frac{1}{(\tau-t_0+T)^2}+K^2+c_1^2(n,K,a,D)\right)
\end{equation}
at $(x_1,t_1)$. This implies that
\begin{equation*}
\begin{aligned}
(\psi^2\omega^2)(x_1,t_1)
&\leq(\psi\omega^2)(x_1,t_1)\\
&\leq c D^2\left(\frac{1}{R^4}{+}\frac{\alpha^2}{R^2}+\frac{1}{(\tau-t_0+T)^2}+K^2+c_1^2(n,K,a,D)\right).
\end{aligned}
\end{equation*}
Since $\psi(x,\tau)=1$ when $d(x,x_0)<R/2$ by the proposition (2)
of Lemma \ref{cutoff}, from the above estimate, we have
\begin{equation*}
\begin{aligned}
\omega(x,\tau)&=(\psi\omega)(x,\tau)\\
&\leq(\psi\omega)(x_1,t_1)\\
&\leq c\,D\left(\frac{1}{R^2}{+}\frac{|\alpha|}{R}+\frac{1}{\tau-t_0+T}+K+c_1(n,K,a,D)\right)
\end{aligned}
\end{equation*}
for all $x\in M$ such that $d(x,x_0)<R/2$. By the definition of
$w(x,\tau)$ and the fact that $\tau\in(t_0-T,t_0]$ was chosen
arbitrarily, we in fact show that
\[
\sqrt{h}|\nabla h|(x,t)\leq c\sqrt{D}\left(\frac{1}{R}{+}\frac{\sqrt{|\alpha|}}{\sqrt{R}}{+}\frac{1}{\sqrt{t{-}t_0{+}T}}
{+}\sqrt{K}{+}\sqrt{c_1(n,K,a,D)}\right)
\]
for all $(x,t)\in Q_{R/2,T}\equiv B(x_0,R/2)\times[t_0-T,t_0]$ with
$t\neq t_0-T$. We have finished the proof of theorem since $h=u^{1/3}$
and $R\geq2$.
\end{proof}

\

In particular, if $a=0$, Theorem \ref{main} implies a local elliptic gradient
estimate for the $f$-heat equation:
\begin{equation}\label{cheorf}
\frac{|\nabla u|}{\sqrt{u}}\leq c(n)\, \sqrt{D}\left(\frac{1}{R}+\sqrt{\frac{|\alpha|}{R}}
+\frac{1}{\sqrt{t-t_0+T}}+\sqrt{K}\right)
\end{equation}
in $Q_{R/2,T}$ with $t\neq t_0-T$, for any $R\geq2$. Compared with author's
recent result \cite{[Wu2]}, though \eqref{cheorf} is not sharp from
Example 1.2 of \cite{[Wu2]}, it seems to be a new form of elliptic
type gradient estimates for $f$-heat equation.

\

Furthermore, if $a=0$ and $f$ is constant, by using the classical Laplacian
comparison $\Delta r\leq(n-1)(1/r+\sqrt{K})$ instead of Wei-Wylie's $f$-Laplacian
comparison (see \eqref{gencomp}), the proof of Theorem \ref{main} in fact implies
the following gradient estimate for the heat equation:
\begin{equation}\label{cheorf2}
\frac{|\nabla u|}{\sqrt{u}}\leq c(n)\, \sqrt{D}\left(\frac{1}{R}
+\frac{1}{\sqrt{t-t_0+T}}+\sqrt{K}\right)
\end{equation}
in $Q_{R/2,T}$ with $t\neq t_0-T$, for all $R>0$. Compared with Hamilton's
estimate \cite{[Ham93]} and Souplet-Zhang's estimate \cite{[Sou-Zh]} for
the heat equation, this elliptic gradient estimate seems to be new.

\

Moreover, gradient estimate \eqref{cheorf} implies
\begin{corollary}\label{app2}
Let $(M,g,e^{-f}dv)$ be an $n$-dimensional complete smooth metric
measure space with $Ric_f\geq -(n-1)K$ for some constant $K\geq 0$.
If $u(x,t)$ is a positive ancient solution to the $f$-heat equation
\eqref{weiheat} such that $0<u(x,t)\leq D$ for some constant $D$,
then $|\nabla u|\leq c(n)D\sqrt{K}$.
\end{corollary}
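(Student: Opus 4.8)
The plan is to obtain Corollary~\ref{app2} as an immediate limiting consequence of the specialization \eqref{cheorf} of Theorem~\ref{main} to the case $a=0$, that is, to the linear $f$-heat equation \eqref{weiheat}. Since $u$ is a positive ancient solution with $0<u\le D$, it is defined and uniformly bounded on all of $M\times(-\infty,t_0]$, so the hypotheses of Theorem~\ref{main}(i) are satisfied on every parabolic cylinder $Q_{R,T}$ with $R\ge 2$ and $T>0$.

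First I would fix an arbitrary point $(x,t)$ at which to bound $|\nabla u|$ and apply \eqref{cheorf} with base point $x_0=x$ and final time $t_0=t$. The left-hand side $\tfrac{|\nabla u|}{\sqrt u}(x,t)$ does not depend on $R$ or $T$, so I am free to send both to infinity on the right. Ancientness makes enlarging the time interval unobstructed: letting $T\to\infty$ drives $\tfrac{1}{\sqrt{t-t_0+T}}\to 0$ (and the excluded value $t_0-T=t-T$ stays strictly below $t$, so the estimate remains valid). Likewise, since $\alpha=\max_{\{y\,|\,d(y,x_0)=1\}}\Delta_f\,r(y)$ is a fixed finite number depending only on the unit sphere about $x_0=x$ and not on $R$, letting $R\to\infty$ makes both $\tfrac1R$ and $\sqrt{|\alpha|/R}$ vanish. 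What survives is
\[
\frac{|\nabla u|}{\sqrt u}(x,t)\le c(n)\sqrt{D}\,\sqrt{K}.
\]

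To finish I would invoke the global bound $\sqrt u\le\sqrt D$ and multiply through, giving
\[
|\nabla u|(x,t)\le c(n)\sqrt D\,\sqrt u\,\sqrt K\le c(n)\,D\sqrt K;
\]
as $(x,t)$ was arbitrary this is the assertion. The whole argument is a limiting passage, and the only point meriting care is that the $R\to\infty$ limit be legitimate: this rests on $\alpha$ being independent of $R$, after which the comparison \eqref{gencomp} has absorbed all remaining $R$-dependence into the $\sqrt K$ term, which is exactly the term that persists in the limit. I therefore anticipate no genuine obstacle beyond bookkeeping these two limits.
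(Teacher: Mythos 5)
Your argument is correct and is essentially the paper's own proof: specialize Theorem \ref{main} to $a=0$ (estimate \eqref{cheorf}), center the cylinder at the point in question, and let the cylinder exhaust space-time using ancientness and the fact that $\alpha$ is determined by the unit sphere about the base point and so is independent of $R$. The only cosmetic difference is that the paper couples the limits by taking $T=R^2$ and sending $R\to\infty$, whereas you send $T$ and $R$ to infinity separately; both are legitimate and yield $|\nabla u|\leq c(n)D\sqrt{K}$.
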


\begin{remark}
Corollary \ref{app2} implies Brighton's result \cite{[Bri]}: any positive
bounded $f$-harmonic function on complete noncompact smooth metric measure
spaces with $Ric_f\geq 0$ must be constant.
\end{remark}

\

In the rest of this section, we shall apply Theorem \ref{main} to prove
Theorem \ref{app1}, Corollary \ref{app1b} and Corollary \ref{app2}.
\begin{proof}[Proof of Theorem \ref{app1}]
When $a>0$, since $K=0$ and $D=e^{-2}$, then $c_1(n,K,a,D)=0$. Fixing
any space-time point $(x_0,t_0)$ and using Theorem \ref{main} for
$0<u\leq e^{-2}$ in the set $B(x_0,R)\times(t_0-R^2, t_0]$, we have
\[
\frac{|\nabla u|}{\sqrt{u}}(x_0,t_0)\leq c(n)e^{-2}\left(\sqrt{\frac{1+|\alpha|}{R}}
+\frac{1}{R}\right)
\]
for all $R\geq2$. Letting $R\to\infty$, then
\[
|\nabla u(x_0, t_0)|=0.
\]
Since $(x_0, t_0)$ is arbitrary, $u$ must be constant in $x$. By
equation \eqref{equ1}, then
\[
\frac{d u}{d t}=a\,u\ln u.
\]
Solving this ODE, we get
\[
u=\exp(ce^{at}),
\]
where $c$ is some constant and $a>0$. For such a solution, if we
let $t\to -\infty$, then
\[
u=\exp(ce^{at})\to 1,
\]
which is contradiction with the theorem assumption: $0<u(x,t)\leq e^{-2}$.
Therefore such $u$ does not exist.

When $a<0$, since $K=0$ and $\delta=e^{-2}$, then $c_2(n,K,a,D)=0$.
For any space-time point $(x_0,t_0)$, we apply Theorem \ref{main} for
$e^{-2}\leq u(x,t)\leq D$ in the set $B(x_0,R)\times(t_0-R^2, t_0]$,
\[
\frac{|\nabla u|}{\sqrt{u}}(x_0,t_0)\leq c(n)\sqrt{D}\left(\sqrt{\frac{1+|\alpha|}{R}}
+\frac{1}{R}\right)
\]
for all $R\geq2$. Similar to the above arguments, letting $R\to\infty$, then $u$ is
constant in $x$, and $u=\exp(ce^{at})$ for some constant $c$.
When $t\to -\infty$, we observe that:
$u=\exp(ce^{at})\to +\infty$ if $c>0$;
$u=\exp(ce^{at})\to 0$ if $c<0$;  $u=1$ if $c=0$. Moreover, the theorem
assumption requires $e^{-2}\leq u(x,t)\leq D$. Hence $u$ only exists when
$D\geq 1$ and the desired result follows.

When $a=0$, $K=0$, and assume that $u(x,t)$ is a positive ancient solution to
equation \eqref{weiheat} such that $u(x,t)=o\Big(\big[r^{1/2}(x)+|t|^{1/4}\big]^2\Big)$
near infinity. Fixing any space-time $(x_0,t_0)$ and using \eqref{cheorf} for $u$ on the set
$B(x_0,R)\times(t_0-R^2, t_0]$, we obtain
\[
\frac{|\nabla u|}{\sqrt{u}}(x_0,t_0)\leq c(n) \left(\sqrt{\frac{1+|\alpha|}{R}}
+\frac{1}{R}\right)\cdot o(\sqrt{R})
\]
for all $R\geq2$.  Letting $R\to\infty$, it follows that
\[
|\nabla u(x_0, t_0)|=0.
\]
Since $(x_0, t_0)$ is arbitrary, we get $u$ is constant in space-time.
\end{proof}

Theorem \ref{app1} in fact implies Corollary \ref{app1b}.
\begin{proof}[Proof of Corollary \ref{app1b}]
If positive smooth function $u(x)$ achieves the weighted log-Sobolev
constant $S_M$ and satisfies $\int_M u^2 e^{-f}dv=V_f(M)$, then from
the introduction above, $u(x)$ satisfies elliptic equation \eqref{sobl}.
Assume that our conclusion is incorrect, that is, $0<u\leq e^{-2}$.
Since $S_M>0$, by Theorem \ref{app1} Case (i), there does not exist
such function $u$ satisfying \eqref{sobl}. This is a contradiction.
\end{proof}

\begin{proof}[Proof of Corollary \ref{app2}]
When $a=0$, for any space-time point $(x_0,t_0)$, using estimate
\eqref{cheorf} for $0<u\leq D$ in the set $B(x_0,R)\times(t_0-R^2, t_0]$,
\[
\frac{|\nabla u|}{\sqrt{u}}(x_0,t_0)\leq c(n)\sqrt{D}\left(\sqrt{\frac{1+|\alpha|}{R}}
+\frac{1}{R}+\sqrt{K}\right)
\]
for all $R\geq2$. Letting $R\to\infty$, then
\[
|\nabla u(x_0, t_0)|\leq c(n)D\sqrt{K}.
\]
Since $(x_0, t_0)$ is arbitrary, the result follows.
\end{proof}

\section{Proof of Theorem \ref{main0}}\label{sec4}
In this section, we will prove Theorem \ref{main0}. The proof is analogous to
Theorem 1.1 in \cite{[Wu2]}. For the readers convenience, we provide a detailed
proof. Compared with the previous proof, here we need to carefully deal with an extra
nonlinear term.

\begin{proof}[Proof of Theorem \ref{main0}]
We only consider the case $a\geq 0$. The case $a<0$ is similar.
Using Lemma \ref{Lem2}, we calculate that
\begin{equation}
\begin{aligned}\label{lembud2}
&\Delta_f(\psi\omega)-\frac{2(g-\ln D)}{\mu-g}\nabla
g\cdot\nabla(\psi\omega) -2\frac{\nabla\psi}{\psi}
\cdot\nabla(\psi\omega)-(\psi\omega)_t\\
\geq&\,2\psi(\mu-g)\omega^2
-\left[\frac{2(g-\ln D)}{\mu-g}\nabla g\cdot\nabla\psi\right]
\omega-2\frac{|\nabla\psi|^2}{\psi}\omega\\
&+(\Delta_f\psi)\omega-\psi_t\omega
-2(a+(n-1)K)\psi\omega-\frac{2a\,g}{\mu-g}\psi\omega.
\end{aligned}
\end{equation}
Let $(x_1,t_1)$ be a point where $\psi\omega$  achieves the maximum.

We first consider the case $x_1\not\in B(x_0,1)$.
By Li-Yau \cite{[Li-Yau]}, without loss of generality we assume that
$x_1$ is not in the cut-locus of $M$. Then at this point, we have
\begin{equation*}
\begin{aligned}
\Delta_f(\psi\omega)\leq0,\,\,\,\,\,\,(\psi\omega)_t\geq0,
\,\,\,\,\,\,\nabla(\psi\omega)=0.
\end{aligned}
\end{equation*}
Hence by \eqref{lembud2}, at $(x_1,t_1)$, we get
\begin{equation}
\begin{aligned}\label{leforkk}
2\psi(\mu-g)\omega^2&\leq
\Bigg\{\left(\frac{2(g-\ln D)}{\mu-g}\nabla g\cdot\nabla\psi\right)\omega
+2\frac{|\nabla\psi|^2}{\psi}\omega-(\Delta_f\psi)\omega\\
&\quad\,\,\,+\psi_t\omega+2(a+(n-1)K)\psi\omega
+\frac{2a\,g}{\mu-g}\psi\omega\Bigg\}.
\end{aligned}
\end{equation}
We will carefully estimate the upper bounds for each term
of the right-hand side of \eqref{leforkk}. Similar to arguments of
Section \ref{sec3}, we still repeatedly use the Young's inequality.
For the first term of right hand side of \eqref{leforkk}, we have
\begin{equation}\label{sjgj1}
\left(\frac{2(g-\ln D)}{\mu-g}\nabla
g\cdot\nabla\psi\right)\omega\leq(\mu-g)\psi \omega^2+c\frac{|\ln D-g|}{R^4}.
\end{equation}
For the second term of the right hand side of \eqref{leforkk}, we get
\begin{equation}\label{sjgj2}
2\frac{|\nabla\psi|^2}{\psi}\omega
\leq\frac{1}{10}\psi\omega^2+\frac{c}{R^4}.
\end{equation}
For the third term of the right hand side of \eqref{leforkk}, we have
\begin{equation}
\begin{aligned}\label{sjgj3}
-(\Delta_f\psi)\omega&=-\left[(\partial_r\psi)\Delta_fr+(\partial^2_r\psi)\cdot
|\nabla r|^2\right]\omega\\
&\leq-\left[\partial_r\psi\left(\alpha+(n-1)K(R-1)\right)
+\partial^2_r\psi\right]\omega\\
&\leq \left[|\partial^2_r\psi|+\left(|\alpha|+(n-1)K(R-1)\right)|\partial_r\psi|\right]\omega\\
&=\psi^{1/2}\omega\frac{|\partial^2_r\psi|}{\psi^{1/2}}
+\left(|\alpha|+(n-1)K(R-1)\right)\psi^{1/2}\omega
\frac{|\partial_r\psi|}{\psi^{1/2}}\\
&\leq\frac{\psi\omega^2}{10}+c
\left[\left(\frac{|\partial^2_r\psi|}{\psi^{1/2}}\right)^2
+\left(\frac{|\alpha|\cdot|\partial_r\psi|}{\psi^{1/2}}\right)^2
+\left(\frac{K(R-1)|\partial_r\psi|}{\psi^{1/2}}\right)^2\right]\\
&\leq\frac{1}{10}\psi\omega^2+\frac{c}{R^4}+c\frac{\alpha^2}{R^2}
+cK^2,
\end{aligned}
\end{equation}
where the $f$-Laplacian comparison was used. Here, since
$Ric_f\geq-(n-1)K$ and $r(x_1,x_0)\geq 1$ in $B(x_0,R)$, we have
the $f$-Laplacian comparison (see Theorem 3.1 in \cite{[WW]})
\[
\Delta_f\,r(x_1)\leq\alpha+(n-1)K(R-1),
\]
where $\alpha=\max_{\{x|d(x,x_0)=1\}}\Delta_f\,r(x)$.
For the fourth term of the right hand side of \eqref{leforkk}, we have
\begin{equation}
\begin{aligned}\label{sjgj4}
|\psi_t|\omega&=\psi^{1/2}\omega\frac{|\psi_t|}{\psi^{1/2}}\\
&\leq\frac{1}{10}\left(\psi^{1/2}\omega\right)^2+c
\left(\frac{|\psi_t|}{\psi^{1/2}}\right)^2\\
&\leq\frac{1}{10}\psi\omega^2+\frac{c}{(\tau-t_0+T)^2}.
\end{aligned}
\end{equation}
For the fifth term of the right hand side of \eqref{leforkk}, we have
\begin{equation}\label{sjgj5}
2(a+(n-1)K)\psi\omega\leq\frac{1}{10}\psi\omega^2+c(a+(n-1)K)^2.
\end{equation}
For the sixth term of the right hand side of \eqref{leforkk}, we have
\begin{equation}\label{sjgj6}
\frac{2a\,g}{\mu-g}\psi\omega\leq
\frac{1}{10}\psi\omega^2+\frac{c a^2g^2}{(\mu-g)^2}.
\end{equation}
Now at $(x_1,t_1)$, we substitute \eqref{sjgj1}-\eqref{sjgj6} to the right
hand side of \eqref{leforkk} and obtain
\begin{equation}
\begin{aligned}\label{leforfor}
2\psi(\mu-g)\omega^2&\leq \psi(\mu-g)\omega^2+c\frac{|\ln D-g|}{R^4}
+\frac{\psi\omega^2}{2}+\frac{c}{(\tau-t_0+T)^2}\\
&\,\,\,\,\,\,+\frac{c}{R^4}+c\frac{\alpha^2}{R^2}
+cK^2+c(a+(n-1)K)^2+\frac{ca^2g^2}{(\mu-g)^2}.
\end{aligned}
\end{equation}
Recalling that
\[
\mu-g \geq1\quad \mathrm{and}\quad \frac{|\ln D-g|}{\mu-g}\leq 1,
\]
then \eqref{leforfor} implies
\begin{equation}
\begin{aligned}\label{lefogong}
\psi \omega^2&\leq \frac{c}{R^4}+\frac{c}{(\tau-t_0+T)^2}\\
&\,\,\,\,\,\, +c\frac{\alpha^2}{R^2}
+cK^2+c(a+(n-1)K)^2+\frac{ca^2g^2}{(\mu-g)^2}
\end{aligned}
\end{equation}
at space-time $(x_1,t_1)$. By some basic analysis, we \emph{claim} that:
\begin{equation}\label{baseest}
\frac{g^2}{(\mu-g)^2}\leq \kappa^2, \quad\mathrm{where}\quad \kappa:=\max\{|\ln D|, 1\},
\end{equation}
for all $g\leq \ln D$, where the constant $\mu:=1+\ln D$. To see this, notice that function
$\frac{g^2}{(\mu-g)^2}$ has only one critical point $g=0$, and it is continuous
on $(-\infty, \ln D]$ satisfying
\[
\lim_{g\to-\infty}\frac{g^2}{(\mu-g)^2}=1\quad
\mathrm{and}\quad\lim_{g\to \ln D}\frac{g^2}{(\mu-g)^2}=|\ln D|^2.
\]
Hence \eqref{baseest} easily follows.

Using \eqref{baseest}, inequality \eqref{lefogong} becomes
\[
(\psi \omega^2)(x_1,t_1)\leq c\left(\frac{\alpha^2+1}{R^2}+\frac{1}{(\tau-t_0+T)^2}
+K^2+(a+(n-1)K)^2+a^2\kappa^2\right),
\]
where we used $R\geq2$. This implies that
\begin{equation*}
\begin{aligned}
(\psi^2\omega^2)(x_1,t_1)&
\leq(\psi\omega^2)(x_1,t_1)\\
&\leq c\left(\frac{\alpha^2+1}{R^2}+\frac{1}{(\tau-t_0+T)^2}
+K^2+(a+(n-1)K)^2+a^2\kappa^2 \right).
\end{aligned}
\end{equation*}
Since $\psi(x,\tau)=1$ when $d(x,x_0)<R/2$ by the proposition (2)
of Lemma \ref{cutoff}, from the above estimate, we have
\begin{equation*}
\begin{aligned}
\omega(x,\tau)&=(\psi\omega)(x,\tau)\\
&\leq(\psi\omega)(x_1,t_1)\\
&\leq c\left(\frac{|\alpha|+1}{R}+\frac{1}{\tau-t_0+T}
+K+a+(n-1)K+a\,\kappa\right)
\end{aligned}
\end{equation*}
for all $x\in M$ such that $d(x,x_0)<R/2$. By the definition of
$w(x,\tau)$ and the fact that $\tau\in(t_0-T,t_0]$ was chosen
arbitrarily, we in fact show that
\[
\frac{|\nabla
g|}{\mu-g}(x,t)\leq c\left(\sqrt{\frac{1{+}|\alpha|}{R}}+\frac{1}{\sqrt{t-t_0+T}}{+}\sqrt{K}+\sqrt{a(\kappa+1)}\right)
\]
for all $(x,t)\in Q_{R/2,T}\equiv B(x_0,R/2)\times[t_0-T,t_0]$ with
$t\neq t_0-T$. Since $g=\ln u$ and $\mu=1+\ln D$, the theorem follows when $x_1\not \in B(x_0,1)$.

\vspace{0.5em}

Now we consider the other case: $x_1\in B(x_0,1)$. In this case, $\psi$ is constant in space
direction in $B(x_0,R/2)$ by our assumption, where $R\geq2$. So
at $(x_1,t_1)$, \eqref{leforkk} yields
\begin{equation*}
\begin{aligned}
2(\mu-g)\omega&\leq
\frac{\psi_t}{\psi}+2(a+(n-1)K)+\frac{2a\,g}{\mu-g}\\
&\leq\frac{C}{\tau-(t_0-T)}+2(a+(n-1)K)+2a\kappa,
\end{aligned}
\end{equation*}
where we used proposition (3) of Lemma \ref{cutoff}. Since
$\mu-g\geq 1$ and $\psi(x,\tau)=1$
when $d(x,x_0)<R/2$ by the proposition (2) of Lemma \ref{cutoff}, the above
estimate indeed gives
\begin{equation*}
\begin{aligned}
\omega(x,\tau)&=(\psi\omega)(x,\tau)\\
&\leq(\psi\omega)(x_1,t_1)\\
&\leq\omega(x_1,t_1)\\
&\leq \frac{C}{\tau-(t_0-T)}+(a+(n-1)K)+a\kappa
\end{aligned}
\end{equation*}
for all $x\in M$ such that $d(x,x_0)<R/2$. By the definition of
$w(x,\tau)$ and the fact that $\tau\in(t_0-T,t_0]$ was chosen
arbitrarily, we in fact prove that
\[
\frac{|\nabla
g|}{\mu-g}(x,t)\leq \frac{C}{\sqrt{t-t_0+T}}+\sqrt{(n-1)K}+\sqrt{a(\kappa+1)}
\]
for all $(x,t)\in Q_{R/2,T}$ with $t\neq t_0-T$. So \eqref{heor1} is still true.
\end{proof}


\bibliographystyle{amsplain}

\end{document}